


\documentclass[final,3p,times]{elsarticle}


\usepackage{amsmath,amsthm,amscd,amsfonts,amssymb,epic,eepic,bbm,,graphicx,ytableau}
\usepackage{tikz}
\usepackage{tkz-euclide}
\usepackage{rotating}
\usetikzlibrary{angles,quotes}

\newtheorem{thm}{Theorem}[section]
\newtheorem{cor}[thm]{Corollary}

\newtheorem{lem}[thm]{Lemma}

\newtheorem{prop}[thm]{Proposition}

\newtheorem{example}[thm]{Example}
\theoremstyle{remark}
\newcommand{\la}{\lambda}



\begin{document}

\begin{frontmatter}



\title{A bijection between self-conjugate and ordinary partitions \\ and counting simultaneous cores as its application}


\author[1]{Hyunsoo Cho}
\ead{coconut@yonsei.ac.kr}
\author[2]{JiSun Huh\corref{cor1}}
\ead{hyunyjia@ajou.ac.kr}
\author[1]{Jaebum Sohn}
\ead{jsohn@yonsei.ac.kr}
\cortext[cor1]{Corresponding author}

\address[1]{Department of Mathematics, Yonsei University, Seoul 03722, Republic of Korea}
\address[2]{Department of Mathematics, Ajou University, Suwon 16499, Republic of Korea}
\begin{abstract}
We give a bijection between the set of self-conjugate partitions and that of ordinary partitions. Also, we show the relation between hook lengths of self conjugate partition and corresponding partition via the bijection. As a corollary, we give new combinatorial interpretations for the Catalan number and the Motzkin number in terms of self-conjugate simultaneous core partitions.

\end{abstract}

\begin{keyword}
partition, self-conjugate partition, hook length, simultaneous core partition



\end{keyword}

\end{frontmatter}


\section{Introduction}

Let $\la=(\la_1,\la_2,\dots,\la_{\ell})$ be a partition of $n$. The {\it Young diagram} of $\la$ is a collection of $n$ boxes in $\ell$ rows with $\la_i$ boxes in row $i$. We label the columns of the diagram from left to right starting with column 1. The box in row $i$ and column $j$ is said to be in position $(i,j)$. For example, the Young diagram for $\la=(5,4,2,1)$ is below.
\begin{center}
\small{
\begin{ytableau}
~&~&~&~&~ \\
~&~&~&~ \\
~&~ \\
~
\end{ytableau}}
\end{center}
For the Young diagram of $\la$, the partition $\la '=(\la_1 ',\la_2 ',\dots, \la_{\la_1} ')$ is called the {\it conjugate} of $\la$, where $\la'_j$ denotes the number of boxes in column $j$. For each box in its Young diagram, we define its {\it hook length} by counting the number of boxes directly to its right or below, including the box itself. Equivalently, for the box in position $(i,j)$, the hook length of $\la$ is defined by 
$$h_\la(i,j)=\la_i+\la'_j-i-j+1.$$ 
For example, hook lengths in the first row above are $8$, $6$, $4$, $3$, and $1$, respectively. We denote $h_{\la}(i,j)$ by $h(i,j)$ when $\la$ is clear.

For a positive integer $t$, a partition $\la$ is called a {\it $t$-core} if none of its hook lengths are multiples of $t$. 
The number of $t$-core partitions of $n$ is denoted by $c_t(n)$. The study of core partitions arises from the representation theory of the symmetric group $S_n$. (See \cite{JK} for details)

In \cite{JK}, we can find the generating function of $c_t(n)$:

\begin{equation}\label{eqn:core}
\sum_{n=0}^{\infty}c_t(n)q^n=\prod_{n=1}^{\infty}\frac{(1-q^{nt})^t}{1-q^n}.
\end{equation}
Moreover, the generating function of $c_t(n)$ can be represented as the product of the Dedekind eta function. Therefore, many researches on core partitions are being made through various ways, such as representation theory and analytic methods--see, for example,  \cite{BN2,BY,GO,HS,Olsson,Olsson2}.
Stanton \cite{Stanton} conjectured the monotonicity of $c_t(n)$, that is, $c_{t+1}(n)\geq c_t(n)$ for $t\geq 4$ and $n\geq t+1$. Motivated by this conjecture, Anderson \cite{Anderson2}, Kim and Rouse \cite{KR} found the asymptotics of $c_t(n)$ and proved Stanton's conjecture partially. \\


A partition whose conjugate is equal to itself is called {\it self-conjugate}. Let $sc_t(n)$ denote the number of $t$-core partitions of $n$ which are self-conjugate. Garvan, Kim, and Stanton \cite{GKS} obtained the generating function of $sc_t(n)$. Among them, the generating function of $sc_{2t}(n)$ is  
 
\begin{equation}\label{eqn:selfcore}
\sum_{n=0}^{\infty}sc_{2t}(n)q^n=\prod_{n=1}^{\infty} (1-q^{4nt})^t (1+q^{2n-1}).
\end{equation} 

A number of properties of self-conjugate core partitions have been found and proved. (See, \cite{BDFKS,BN3}) Similarly, monotonicity conjecture on $sc_t(n)$ has been suggested in \cite{HN} and asymptotics are provided in \cite{Alpoge}. \\

The main result in this paper is to construct a bijection between the set $\mathcal{SC}^{(m)}$ of self-conjugate partitions with some restrictions and the set $\mathcal{P}$ of ordinary partitions which will be defined and explained in Sections \ref{sec:pre} and \ref{sec:main}.

\begin{thm}\label{thm:main}
There is a bijection between $\mathcal{SC}^{(m)}$ and $\mathcal{P}$.
\end{thm}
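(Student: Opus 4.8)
The plan is to encode a self-conjugate partition by its principal (diagonal) hooks and then convert that data into an ordinary partition. Concretely, given $\la\in\mathcal{SC}^{(m)}$, let $d$ be the side length of its Durfee square and consider the boxes $(1,1),(2,2),\dots,(d,d)$ on the main diagonal. Since $\la=\la'$, the hook length of the box $(i,i)$ is
$$h(i,i)=\la_i+\la'_i-2i+1=2(\la_i-i)+1,$$
so every principal hook length is odd, and the strict chain $\la_1-1>\la_2-2>\cdots>\la_d-d\geq 0$ shows that the $d$ principal hook lengths are distinct. First I would establish the converse: any finite sequence of distinct positive odd integers is the principal-hook data of a \emph{unique} self-conjugate partition. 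This makes $\la\mapsto(h(1,1),\dots,h(d,d))$ a bijection from self-conjugate partitions onto partitions into distinct odd parts, and it is the structural backbone of the whole argument.

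Next I would compose this encoding with the classical passage from partitions into distinct parts to ordinary partitions. Writing $h(i,i)=2a_i+1$ with $a_1>a_2>\cdots>a_d\geq 0$, I would subtract the staircase by setting $\mu_i=a_i-(d-i)$; this yields a weakly decreasing sequence $\mu_1\geq\mu_2\geq\cdots\geq\mu_d\geq 0$, that is, an ordinary partition with at most $d$ parts. The inverse adds back the staircase, doubles, and adds one to recover the distinct odd principal hooks, from which $\la$ is rebuilt; checking that these two operations are mutually inverse is routine.

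The remaining, and genuinely essential, step is to verify that this map carries the defining restrictions of $\mathcal{SC}^{(m)}$ exactly onto those of $\mathcal{P}$. I expect the superscript $(m)$ to bound the Durfee square size or the largest principal hook, and I would propagate that bound through the staircase subtraction into the corresponding constraint (on the number of parts, or on the part sizes) that cuts out $\mathcal{P}$; this is where the precise definitions from Sections \ref{sec:pre} and \ref{sec:main} must be invoked. I anticipate this bookkeeping to be the main obstacle, since the entire content of the theorem is precisely that the two families of restrictions match up under the hook encoding, rather than the bijectivity itself.

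Finally, because the paper also advertises a relation between the hook lengths of $\la$ and those of its image, I would set up the correspondence so as to make the hook-length dictionary transparent: I would track how individual hook lengths transform, not merely the principal ones, and record this as a companion statement. This refined version is what feeds the application to self-conjugate simultaneous core partitions, so I would prioritize an explicit, hook-preserving description of the bijection over a purely set-theoretic counting argument.
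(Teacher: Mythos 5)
Your encoding of a self-conjugate partition by its distinct odd principal hook lengths is also the paper's starting point, but the second step --- subtracting a single staircase from the sequence $a_1>\cdots>a_d\geq 0$ with $h(i,i)=2a_i+1$ --- does not give the required bijection, and the failure is visible already in the weights. Your map satisfies $|\la|=2|\mu|+d^2$, where $d$ is the Durfee square size, whereas the theorem (as used in Corollary \ref{cor:nocore}) requires a map sending partitions of $4n+\frac{m(m+1)}{2}$ in $\mathcal{SC}^{(m)}$ to partitions of $n$: the scaling factor must be $4$, not $2$, and the excess must be the triangular number $\frac{m(m+1)}{2}$, not the square $d^2$. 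Moreover, your guess about the meaning of the superscript is incorrect: $\mathcal{SC}^{(m)}$ is \emph{not} cut out by bounding the Durfee square; it is the set of self-conjugate $\la$ with $|D_1(\la)|-|D_3(\la)|=(-1)^{m+1}\left\lceil \frac{m}{2}\right\rceil$, where $D_1(\la)$ and $D_3(\la)$ collect the diagonal hook lengths congruent to $1$ and to $3$ modulo $4$. A single class $\mathcal{SC}^{(m)}$ contains partitions of arbitrarily large Durfee square, and your map collapses them: both $\emptyset$ and $(2,2)$ lie in $\mathcal{SC}^{(0)}$ (for $(2,2)$ one has $D(\la)=\{3,1\}$, hence $|D_1|=|D_3|=1$), yet your recipe sends $(2,2)$ to $(a_1-1,a_2)=(0,0)=\emptyset$, the same image as $\emptyset$. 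So the proposed map is not injective on $\mathcal{SC}^{(m)}$, and no amount of bookkeeping in the final step can repair that.

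The missing idea is to split the diagonal hook lengths by residue modulo $4$, writing $D_1(\la)=\{4a_1+1,\dots,4a_r+1\}$ and $D_3(\la)=\{4b_1-1,\dots,4b_s-1\}$, and to build $\mu$ from these \emph{two} strictly decreasing sequences asymmetrically: the paper sets $\mu_i=a_i+i+s-r$ for $i\leq r$ and lets $(\mu_{r+1},\dots,\mu_\ell)$ be the conjugate of $(b_1-s,\,b_2-s+1,\dots,b_s-1)$. The defining condition of $\mathcal{SC}^{(m)}$ fixes $r-s$, which is exactly what makes the two staircase corrections interlock into a single well-defined partition and produces the factor $4$ together with the triangular excess $\frac{m(m+1)}{2}$. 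Your instinct to track all hook lengths, not just the principal ones, is the right one for the later applications, but it must be carried out on top of this two-sequence construction rather than the single-staircase map.
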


If we let $p(n)$ be the number of partitions of $n$ and $sc(n)$ be the number of self-conjugate partitions of $n$, then as a result of our main theorem we have the following corollary. 

\begin{cor}\label{cor:nocore}
For $|q|<1$,
$$\sum_{n=0}^{\infty}  sc(n) q^n =\left(\sum_{n=0}^{\infty} p(n) q^{4n}\right) \left(\sum_{n=0}^{\infty} q^{\frac{n(n+1)}{2}}\right).$$
\end{cor}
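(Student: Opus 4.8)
The plan is to read the corollary off the bijection of Theorem~\ref{thm:main}, the only additional work being a careful bookkeeping of sizes. I read the superscript $m$ in $\mathcal{SC}^{(m)}$ as a nonnegative integer parameter, and I expect from the constructions of Sections~\ref{sec:pre} and~\ref{sec:main} that every self-conjugate partition lies in exactly one $\mathcal{SC}^{(m)}$, so that the family of all self-conjugate partitions is the disjoint union $\bigsqcup_{m\ge 0}\mathcal{SC}^{(m)}$. Granting this, the first step is to split the left-hand series according to $m$:
\begin{equation*}
\sum_{n=0}^{\infty} sc(n)\,q^n=\sum_{m\ge 0}\ \sum_{\la\in\mathcal{SC}^{(m)}} q^{|\la|}.
\end{equation*}

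The crux is to determine how the bijection $\phi\colon\mathcal{SC}^{(m)}\to\mathcal{P}$ of Theorem~\ref{thm:main} transforms the size of a partition. I would extract from the explicit description of $\phi$, and in particular from the relation between the hook lengths of a self-conjugate partition and those of its image asserted in the abstract and established in Section~\ref{sec:main}, the size identity
\begin{equation*}
|\la|=4\,|\phi(\la)|+\frac{m(m+1)}{2}\qquad(\la\in\mathcal{SC}^{(m)}).
\end{equation*}
The additive term $m(m+1)/2$ should be the size of the distinguished minimal element of $\mathcal{SC}^{(m)}$, namely the preimage $\phi^{-1}(\varnothing)$, which I expect to be the self-conjugate staircase $\delta_m=(m,m-1,\dots,1)$ of size $m(m+1)/2$; the factor $4$ should reflect that the cells of $\la$ lying off this staircase occur in diagonal-symmetric configurations whose total is four times a quantity governed by the hooks of $\phi(\la)$. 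Pinning down this size relation is the one genuinely nontrivial point; everything else is formal.

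With the size relation in hand, the remaining step is purely generating-functional. For fixed $m$, substituting $|\la|=4|\phi(\la)|+m(m+1)/2$ and using that $\phi$ is a bijection onto $\mathcal{P}$ gives
\begin{equation*}
\sum_{\la\in\mathcal{SC}^{(m)}} q^{|\la|}=q^{\frac{m(m+1)}{2}}\sum_{\mu\in\mathcal{P}} q^{4|\mu|}=q^{\frac{m(m+1)}{2}}\left(\sum_{n=0}^{\infty} p(n)\,q^{4n}\right),
\end{equation*}
where the last equality regroups $\mathcal{P}$ by size. Summing over $m\ge 0$ factors the double series and yields exactly
\begin{equation*}
\sum_{n=0}^{\infty} sc(n)\,q^n=\left(\sum_{n=0}^{\infty} p(n)\,q^{4n}\right)\left(\sum_{m\ge 0} q^{\frac{m(m+1)}{2}}\right),
\end{equation*}
which is the assertion of Corollary~\ref{cor:nocore}.

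As an independent consistency check I would verify the identity at the level of infinite products. Since self-conjugate partitions are equinumerous with partitions into distinct odd parts, the left side equals $\prod_{n\ge1}(1+q^{2n-1})$; writing $1+q^{2n-1}=(1-q^{4n-2})/(1-q^{2n-1})$ and using Gauss's evaluation $\sum_{m\ge0}q^{m(m+1)/2}=\prod_{n\ge1}(1-q^{2n})/(1-q^{2n-1})$ together with $\sum_{n\ge0}p(n)q^{4n}=\prod_{n\ge1}(1-q^{4n})^{-1}$, both sides collapse to $\prod_{n\ge1}(1-q^{4n-2})/(1-q^{2n-1})$. This product computation is only a sanity test: it confirms that whatever size statistic $\phi$ carries must be $4|\phi(\la)|+m(m+1)/2$, but it is not a substitute for reading that statistic off the construction of $\phi$, which I again expect to be the main obstacle.
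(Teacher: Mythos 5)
Your proposal is correct and follows essentially the same route as the paper: the decomposition $\mathcal{SC}=\bigsqcup_{m\ge 0}\mathcal{SC}^{(m)}$ is exactly the paper's set partition from Section~\ref{sec:pre}, your size identity $|\la|=4|\phi(\la)|+\frac{m(m+1)}{2}$ is precisely what the well-definedness check of $\phi_n^{(m)}$ (equivalently Proposition~\ref{prop:size}) establishes, and your intermediate display is verbatim Corollary~\ref{cor:first}, from which the paper likewise deduces the result by summing over $m$. Your guess that $\phi^{-1}(\varnothing)$ is the staircase $(m,m-1,\dots,1)$ is also consistent with the construction.
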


Moreover, from the bijection, we can find a relation between hook lengths of self-conjugate partition and that of corresponding partition. (See Theorem \ref{thm:second}) As an application of this relation, we can prove the following theorem. We use the notation of a $(t_1,...,t_p)$-core partition if it is simultaneously a $t_1$-core,\dots, and a $t_p$-core.

\begin{thm}\label{thm:simul}
 Let $c_{(t_1,\dots,t_p)}(n)$ be the number of $(t_1,\dots,t_p)$-core partitions of $n$ and $sc_{(t_1,\dots,t_p)}(n)$ be the number of self-conjugate $(t_1,\dots,t_p)$-core partitions of $n$. For $|q|<1$,
$$\sum_{n=0}^{\infty} sc_{(2t_1,...,2t_p)}(n) q^n 
=\left(\sum_{n=0}^{\infty}c_{(t_1,\dots,t_p)}(n)q^{4n}\right)\left(\sum_{n=0}^{\infty}q^{\frac{n(n+1)}{2}}\right).$$
\end{thm}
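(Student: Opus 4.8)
The plan is to upgrade the enumerative identity of Corollary~\ref{cor:nocore} to its core-restricted refinement by tracking, through the bijection of Theorem~\ref{thm:main}, exactly which hook lengths of a self-conjugate partition can be divisible by an even modulus.

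First I would recall the structure behind Corollary~\ref{cor:nocore}. Writing $\mathcal{SC}=\bigsqcup_{m\ge 0}\mathcal{SC}^{(m)}$ for the decomposition of all self-conjugate partitions according to the parameter $m$, the bijection $\phi\colon\mathcal{SC}^{(m)}\to\mathcal{P}$ of Theorem~\ref{thm:main} obeys the size relation $|\lambda|=4|\mu|+\tfrac{m(m+1)}{2}$ whenever $\mu=\phi(\lambda)$; summing $q^{|\lambda|}$ over $m$ and over $\mathcal{P}$ is precisely what produces the two factors of Corollary~\ref{cor:nocore}. The whole task is therefore to show that $\phi$ carries the self-conjugate $(2t_1,\dots,2t_p)$-core partitions onto the $(t_1,\dots,t_p)$-core partitions in $\mathcal{P}$, while leaving this size relation undisturbed.

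The key step is to read off the divisibility behaviour of hook lengths from Theorem~\ref{thm:second}. The feature I would isolate is that, for $\lambda\in\mathcal{SC}^{(m)}$ with $\mu=\phi(\lambda)$, the even hook lengths of $\lambda$ occur in the symmetric pairs $h_\lambda(i,j)=h_\lambda(j,i)$ and, taken one per pair, form exactly the multiset $\{\,2h : h\in H(\mu)\,\}$ of doubled hook lengths of $\mu$, while every remaining hook length of $\lambda$ is odd. (That the diagonal hook lengths $h_\lambda(i,i)=2\lambda_i-2i+1$ are odd is automatic for a self-conjugate partition, and is what leaves the factor $\sum_m q^{m(m+1)/2}$ untouched by the core condition.) Granting this, fix an index $i$: an odd hook length is never divisible by the even number $2t_i$, whereas a doubled hook length $2h$ is divisible by $2t_i$ if and only if $h$ is divisible by $t_i$. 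Hence $\lambda$ has a hook length divisible by $2t_i$ precisely when $\mu$ has a hook length divisible by $t_i$, i.e. $\lambda$ is a $2t_i$-core iff $\mu$ is a $t_i$-core. Intersecting over $i=1,\dots,p$ gives that $\lambda$ is a $(2t_1,\dots,2t_p)$-core iff $\mu=\phi(\lambda)$ is a $(t_1,\dots,t_p)$-core.

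Finally I would assemble the generating function. By the previous paragraph, $\phi$ restricts, for each $m$, to a bijection between the self-conjugate $(2t_1,\dots,2t_p)$-core partitions in $\mathcal{SC}^{(m)}$ and the $(t_1,\dots,t_p)$-core partitions in $\mathcal{P}$, still subject to $|\lambda|=4|\mu|+\tfrac{m(m+1)}{2}$. Summing over $m\ge 0$ and over all core $\mu$ then yields
\[
\sum_{n=0}^{\infty} sc_{(2t_1,\dots,2t_p)}(n)\,q^{n}
=\left(\sum_{m=0}^{\infty} q^{\frac{m(m+1)}{2}}\right)\left(\sum_{n=0}^{\infty} c_{(t_1,\dots,t_p)}(n)\,q^{4n}\right),
\]
which is the assertion. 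I expect the main obstacle to be the key step: verifying from Theorem~\ref{thm:second} that the even hook lengths of $\lambda$ are exactly the symmetric-pair doubles of the hook lengths of $\mu$, while all others are odd. Once that parity-and-doubling dictionary is in hand, the equivalence of the core conditions and the final summation are routine.
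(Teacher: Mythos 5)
Your proposal is correct and follows essentially the same route as the paper: it derives the core-equivalence ($\lambda$ is a $(2t_1,\dots,2t_p)$-core iff $\mu=\phi^{(m)}(\lambda)$ is a $(t_1,\dots,t_p)$-core) from Theorem \ref{thm:second} exactly as in the paper's Corollary following that theorem, combines it with the size relation $|\lambda|=4|\mu|+\frac{m(m+1)}{2}$ from the bijection (the paper's Proposition \ref{prop:coresize}), and sums over the disjoint decomposition $\mathcal{SC}=\bigcup_{m\geq 0}\mathcal{SC}^{(m)}$. No gaps; the parity-and-doubling dictionary you flag as the main obstacle is precisely the content of Theorem \ref{thm:second}, which you are entitled to invoke.
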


A special case of $p=1$ is the following corollary.

\begin{cor}\label{cor:core}
For $|q|<1$,
$$\sum_{n=0}^{\infty} sc_{2t}(n) q^n =\left(\sum_{n=0}^{\infty} c_t(n) q^{4n}\right) \left(\sum_{n=0}^{\infty} q^{\frac{n(n+1)}{2}}\right).$$
\end{cor}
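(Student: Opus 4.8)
The plan is to read Corollary \ref{cor:core} off from Theorem \ref{thm:simul} by taking $p=1$. In that case the tuple $(t_1,\dots,t_p)$ reduces to a single modulus $t_1=t$, and being a $(t)$-core is by definition the same as being a $t$-core; hence $c_{(t)}(n)=c_t(n)$ and, on the self-conjugate side, $sc_{(2t)}(n)=sc_{2t}(n)$. Substituting these two identifications into the identity of Theorem \ref{thm:simul} reproduces the claimed formula verbatim, so once Theorem \ref{thm:simul} is established the corollary needs no further argument.

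Beyond this immediate specialization, I would record an independent check against the known product formulas, which makes the statement self-contained and confirms the consistency of Theorem \ref{thm:simul} with the classical generating functions. Replacing $q$ by $q^4$ in \eqref{eqn:core} gives
$$\sum_{n=0}^{\infty}c_t(n)q^{4n}=\prod_{n=1}^{\infty}\frac{(1-q^{4nt})^t}{1-q^{4n}},$$
and Gauss's identity for the triangular numbers supplies
$$\sum_{n=0}^{\infty}q^{\frac{n(n+1)}{2}}=\prod_{n=1}^{\infty}\frac{1-q^{2n}}{1-q^{2n-1}}.$$
Forming the product of the two right-hand sides and simplifying is then the whole content of the verification.

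The key algebraic step is to sort the factors by their exponent modulo $4$. The quotient $\prod_{n\geq1}(1-q^{2n})/\prod_{n\geq1}(1-q^{4n})$ collapses to $\prod_{n\geq1}(1-q^{4n-2})$, and factoring $1-q^{4n-2}=(1-q^{2n-1})(1+q^{2n-1})$ against the remaining denominator $\prod_{n\geq1}(1-q^{2n-1})$ leaves $\prod_{n\geq1}(1+q^{2n-1})$. What survives is
$$\prod_{n=1}^{\infty}(1-q^{4nt})^t(1+q^{2n-1}),$$
which is precisely the generating function $\sum_{n\geq0}sc_{2t}(n)q^n$ of \eqref{eqn:selfcore}, completing the check.

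I do not anticipate a genuine obstacle: the $p=1$ specialization is immediate, and the only place demanding care is the bookkeeping in the verification—recognizing the Gauss identity in the correct form $\prod_{n\geq1}(1-q^{2n})/(1-q^{2n-1})$ rather than $\prod_{n\geq1}(1+q^n)$, and then partitioning the exponents by their residue modulo $4$ so that the multiples of $4t$, the even exponents that are not multiples of $4$, and the odd exponents are each accounted for exactly once.
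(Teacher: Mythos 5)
Your proof is correct and takes essentially the same route as the paper: Corollary \ref{cor:core} is obtained exactly by setting $p=1$ in Theorem \ref{thm:simul}, with $c_{(t)}(n)=c_t(n)$ and $sc_{(2t)}(n)=sc_{2t}(n)$. The supplementary product-formula verification you carry out is precisely the alternative analytic argument the paper only sketches in the remark preceding the corollary, and your manipulation of the infinite products there is accurate.
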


We remark that Corollary \ref{cor:nocore} and Corollary \ref{cor:core} can also be proved by analytic method using equations (\ref{eqn:core}), (\ref{eqn:selfcore}), and Gauss's well-known identity

$$ \sum_{n=0}^{\infty}q^{\frac{n(n+1)}{2}}=\prod_{n=1}^{\infty}\frac{1-q^{2n}}{1-q^{2n-1}}.$$

\noindent However, it might be difficult to prove Theorem \ref{thm:simul} by analytic approach.

At the end of this paper, a new expression of the Catalan number and the Motzkin number in terms of self-conjugate simultaneous core partitions is given (See Corollary \ref{cor:catalanmotzkin}) as a corollary of Theorem \ref{thm:simul}.\\

This paper is organized as follows. In Section \ref{sec:pre}, we introduce new classification of the set of self-conjugate partitions. In Section \ref{sec:main}, we give a bijection between the set of self-conjugate partitions and that of ordinary partitions. In Section \ref{sec:hook}, we  explain the relation between even hook lengths in a self-conjugate partition and hook lengths in the corresponding partition via the bijection. Furthermore, we give some new results of the number of self-conjugate simultaneous core partitions.


\section{Preliminaries}\label{sec:pre}


We give some basic notions and introduce a set partition of the set $\mathcal{SC}$ of self-conjugate partitions. For a number $a$, $|a|$ is the absolute value, for the set $A$, $|A|$ is the cardinality of the set, and for a partition $\lambda$, $|\lambda|$ is the number of being partitioned.


\subsection{Main diagonal hook lengths}


Let $\la$ be a partition.  We often use the notation $\delta_i$ for the hook length $h(i,i)$ of the $i$th box on the main diagonal. The set $D(\la)=\{\delta_i~:~i=1,2,\dots\}$ is called the {\it set of main diagonal hook lengths of $\la$}. It is clear that if $\la$ is self-conjugate, then $D(\la)$ determines $\la$, and elements of $D(\la)$ are all distinct and odd. Hence, for a self-conjugate partition $\la$, $D(\la)$ can be divided into the following two subsets;
\begin{eqnarray*}
D_1(\la)&=&\{\delta_{i}\in D(\la)~:~\delta_{i}\equiv 1\pmod{4}\},\\
D_3(\la)&=&\{\delta_{i}\in D(\la)~:~\delta_{i}\equiv 3\pmod{4}\}.
\end{eqnarray*}

\begin{example}\label{ex:hook} 
Let $\la=(4,4,4,3)$ be a self-conjugate partition of $15$. Figure \ref{fig:example} shows the Young diagram and hook lengths of $\la$. The set $D(\la)=\{7,5,3\}$ of main diagonal hook lengths is divided into $D_1(\la)=\{5\}$ and $D_3(\la)=\{7,3\}$.

\begin{figure}[ht!]
\centering
\begin{ytableau}
{\bf 7}&6&5&3 \\ 6&{\bf 5}&4&2 \\ 5&4&{\bf 3}&1 \\ 3&2&1
\end{ytableau}
\caption{The Young diagram of a self-conjugate partition and its hook lengths}\label{fig:example}
\end{figure}
\end{example}

The set of hook lengths of boxes in the first column of the Young diagram of $\la$ is called the {\it beta-set} of $\la$ and denoted by $\beta(\la)$. If $\la'$ is the conjugate partition of $\la$, then the $\beta(\la')$ is the set of hook lengths of boxes in the first row of the Young diagram of $\la$. One may notice that if $\la$
is self-conjugate, then $\beta(\la)=\beta(\la')$. The following lemma plays an important role when we deal with hook lengths.

\begin{lem}\label{lem:hook} 
Let $\la=(\la_1,\la_2,\dots)$ be a self-conjugate partition with $D(\la)=\{\delta_1, \delta_2, \dots,\delta_d\}$. If we let
\begin{eqnarray*}
\beta_{\leq d}(\la)&=&\{h(i,1)~:~i=1,2,\dots,d\},\\
\beta_{> d}(\la)&=&\{h(i,1)~:~i=d+1,d+2,\dots,\la_1\},
\end{eqnarray*}
so that $\beta(\la)=\beta_{\leq d}(\la)\cup\beta_{> d}(\la)$, then
\begin{eqnarray*}
\beta_{\leq d}(\la)&=&\left\{\frac{\delta_1+\delta_1}{2},\frac{\delta_1+\delta_2}{2},\dots,\frac{\delta_1+\delta_d}{2}\right\},\\
\beta_{> d}(\la)&=&\left\{\frac{\delta_1-1}{2},\frac{\delta_1-3}{2},\dots,1\right\}-\left\{\frac{\delta_1-\delta_d}{2},\frac{\delta_1-\delta_{d-1}}{2},\dots,\frac{\delta_1-\delta_2}{2}\right\}.
\end{eqnarray*}
\end{lem}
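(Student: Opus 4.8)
The plan is to translate everything into the single identity $h(i,1)=\lambda_i+\lambda_1-i$ and then exploit a reflection symmetry of the beta-set that is special to self-conjugate partitions. First I would record the two elementary consequences of self-conjugacy. Since $\lambda=\lambda'$, the number of parts equals $\lambda_1$, so $\lambda'_1=\lambda_1$ and the first-column hook lengths are $h(i,1)=\lambda_i+\lambda_1-i$ for $i=1,\dots,\lambda_1$; these are $\lambda_1$ distinct integers, strictly decreasing in $i$, lying in $\{1,\dots,2\lambda_1-1\}$ with maximum $h(1,1)=2\lambda_1-1=\delta_1$. Likewise $\delta_i=h(i,i)=2\lambda_i-2i+1$, hence $\lambda_i-i=(\delta_i-1)/2$. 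The index $d=|D(\lambda)|$ is exactly the size of the Durfee square, so $\lambda_i\ge i$ (that is, $\lambda_i-i\ge 0$) precisely for $i\le d$. For such $i$ I would substitute directly, using $\lambda_1=(\delta_1+1)/2$:
$$h(i,1)=\lambda_1+(\lambda_i-i)=\lambda_1+\tfrac{\delta_i-1}{2}=\tfrac{\delta_1+\delta_i}{2}.$$
This gives the asserted description of $\beta_{\leq d}(\lambda)$ at once, and it shows $\beta_{\leq d}(\lambda)=\beta(\lambda)\cap\{\lambda_1,\dots,2\lambda_1-1\}$, while $\beta_{>d}(\lambda)=\beta(\lambda)\cap\{1,\dots,\lambda_1-1\}$, because the rows below the Durfee square are exactly those with $\lambda_i-i<0$.

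The heart of the argument is the reflection symmetry
$$k\in\beta(\lambda)\iff 2\lambda_1-1-k\notin\beta(\lambda)\qquad(0\le k\le 2\lambda_1-1).$$
I would prove this from the standard encoding of $\lambda$ by the set $X(\lambda)=\{\lambda_j-j:j\ge 1\}$, where $\lambda_j=0$ for $j$ exceeding the number of parts: the complement satisfies $\mathbb{Z}\setminus X(\lambda)=\{-1-(\lambda'_j-j):j\ge 1\}=-1-X(\lambda')$, which for $\lambda=\lambda'$ says that $X(\lambda)$ and $-1-X(\lambda)$ partition $\mathbb{Z}$; equivalently, beads and gaps are interchanged under reflection of the abacus. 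Writing $k=c+\lambda_1$ converts $c\in X(\lambda)\iff -1-c\notin X(\lambda)$ into the displayed statement, once one checks that membership of $k$ in $\beta(\lambda)$ is the same as membership of $c=k-\lambda_1$ in $X(\lambda)$ throughout $1\le k\le 2\lambda_1-1$, the only endpoint being $k=0\notin\beta(\lambda)$, which matches $\delta_1\in\beta(\lambda)$. This symmetry step is where the care is needed, and I expect it to be the main obstacle: one must handle the indexing and the boundary values $0$ and $2\lambda_1-1$ correctly so that the $2\lambda_1$ residues pair up into $\lambda_1$ pairs with exactly one member of each pair lying in $\beta(\lambda)$.

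Finally I would read off $\beta_{>d}(\lambda)$ by reflecting $\beta_{\leq d}(\lambda)$. Each $m=(\delta_1+\delta_i)/2\in\beta_{\leq d}(\lambda)$ reflects to $2\lambda_1-1-m=(\delta_1-\delta_i)/2$, and by the symmetry these reflected values are precisely the elements of $\{0,\dots,\lambda_1-1\}$ that are absent from $\beta(\lambda)$. For $i=1$ the reflection is $0$, which is irrelevant; for $i=2,\dots,d$ the reflections are the $d-1$ distinct values $(\delta_1-\delta_i)/2$, each lying in $\{1,\dots,\lambda_1-1\}=\{1,\dots,(\delta_1-1)/2\}$. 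Since $|\beta_{>d}(\lambda)|=\lambda_1-d$ while $\{1,\dots,(\delta_1-1)/2\}$ has $\lambda_1-1$ elements, a cardinality count confirms that deleting exactly these $d-1$ values yields $\beta_{>d}(\lambda)$, namely
$$\beta_{>d}(\lambda)=\Big\{\tfrac{\delta_1-1}{2},\tfrac{\delta_1-3}{2},\dots,1\Big\}\setminus\Big\{\tfrac{\delta_1-\delta_d}{2},\dots,\tfrac{\delta_1-\delta_2}{2}\Big\},$$
which is the claimed formula.
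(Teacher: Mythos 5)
Your proof is correct, and it takes a genuinely different route to the second half of the lemma. The paper also begins with the direct computation $h(i,1)=\la_1+\la_i-i=\frac{\delta_1+\delta_i}{2}$ for $i\le d$, but for $\beta_{>d}(\la)$ it argues locally: it bounds $h(d+1,1)\le\frac{\delta_1-1}{2}$, rules out each value $\frac{\delta_1-\delta_j}{2}$ by observing that $\la_i+\la_j-i-j+1$ (a ``hook length,'' possibly of a box outside the diagram) can never vanish, and then finishes with the cardinality count $|\beta_{>d}(\la)|=\la_1-d$. You instead establish the global self-complementation of the beta-set, $k\in\beta(\la)\iff 2\la_1-1-k\notin\beta(\la)$, from the standard fact that $\mathbb{Z}\setminus\{\la_j-j\}=\{-1-(\la'_j-j)\}$, and read off $\beta_{>d}(\la)$ as the reflection of the gaps in the top half. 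The two arguments share the same engine --- the disjointness half of your complementation identity is precisely the paper's non-vanishing of $\la_i+\la_j-i-j+1$ --- but yours uses the full partition of $\mathbb{Z}$ into beads and gaps, which pins down $\beta_{>d}(\la)$ exactly and makes your closing cardinality count redundant rather than essential; the paper's version needs the count to upgrade an inclusion to an equality. The price you pay is the bookkeeping at the endpoints $k=0$ and $k=2\la_1-1$, which you handle correctly. Either argument is acceptable at the paper's level of rigor, since the paper likewise invokes its non-vanishing fact without proof.
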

\begin{proof}
We consider the set $\beta_{\leq d}(\la)$ first. Since $\la$ is self-conjugate, $\delta_i=2\la_i-2i+1$ for $i\leq d$. Hence, 
$$h(i,1)=\la_i+\la_1-i=\frac{\delta_1+\delta_i}{2} \qquad \text{for} \quad 1 \leq i \leq d.$$ 

Now, we consider the set $\beta_{>d}(\la)$.
If $\la$ has exactly $d$ parts, then one can notice that $\beta_{>d}(\la)$ must be an empty set. Since $\la$ is the partition of $d^2$ with $\la_i=d$ for all $i=1,2,\dots, d$, we have $\delta_d=1, ~\delta_{d-1}=3,~\dots, ~\delta_{2}=\delta_1-2$, and therefore,
$$\beta_{>d}(\la)=\left\{\frac{\delta_1-1}{2},\frac{\delta_1-3}{2}\dots,1\right\}-\left\{\frac{\delta_1-\delta_d}{2},\frac{\delta_1-\delta_{d-1}}{2},\dots,\frac{\delta_1-\delta_2}{2}\right\}=\emptyset.$$
If $\la$ has more than $d$ elements, then 
$$h(d+1,1)=\la_{d+1}+\la_{1}-(d+1)\leq d+\la_1-(d+1)=\la_1-1=\frac{\delta_1-1}{2}.$$ 
Since $h(d+1,1)$ is the largest element of $\beta_{>d}(\la)$, the set $\beta_{>d}(\la)$ is a subset of $\left\{\frac{\delta_1-1}{2},\frac{\delta_1-3}{2}\dots,1\right\}$. 
If we suppose that $\frac{\delta_1-\delta_j}{2}$ is an element of $\beta_{>d}(\la)$ for some $j\leq d$, then for some $i\in\{d+1,d+2,\dots,\la_1\}$, $$h(i,1)-\frac{\delta_1-\delta_j}{2}=(\la_1+\la_i-i)-(\la_1-\la_j+j-1)=\la_i+\la_j-i-j+1=0.$$
But $\la_i+\la_j-i-j+1$ is the hook length $h(i,j)$, so it should be nonzero even if the box $(i,j)$ is not in the Young diagram of $\la$. Hence, we have come to the conclusion that $\frac{\delta_1-\delta_j}{2}\notin \beta_{>d}(\la)$ for all $j\leq d$ and  
$$\beta_{>d}(\la)\subset\left\{\frac{\delta_1-1}{2},\frac{\delta_1-3}{2},\dots,1\right\}-\left\{\frac{\delta_1-\delta_d}{2},\frac{\delta_1-\delta_{d-1}}{2},\dots,\frac{\delta_1-\delta_2}{2}\right\}.$$
In fact, $\beta_{>d}(\la)=\left\{\frac{\delta_1-1}{2},\frac{\delta_1-3}{2},\dots,1\right\}-\left\{\frac{\delta_1-\delta_d}{2},\frac{\delta_1-\delta_{d-1}}{2},\dots,\frac{\delta_1-\delta_2}{2}\right\}$ since $|\beta_{>d}(\la)|=\la_1-d$.
\end{proof}  


\subsection{Self-conjugate partitions with same disparity}


We denote the set of self-conjugate partitions of $n$ by $\mathcal{SC}(n)$. Using the value $|D_1(\la)|-|D_3(\la)|$ of $\la\in\mathcal{SC}(n)$, we split $\mathcal{SC}(n)$ as follows: For nonnegative integers $m$ and $n$, we define a set $\mathcal{SC}^{(m)}(n)$ by
$$\mathcal{SC}^{(m)}(n)=\{\la\in\mathcal{SC}(n)~:~|D_1(\la)|-|D_3(\la)|=(-1)^{m+1} \left\lceil \frac{m}{2} \right\rceil\},$$ where $\lceil x \rceil$ is the least integer greater than or equal to $x$.
We note that for a self-conjugate partition $\la$, if $|D_1(\la)|-|D_3(\la)|=k$ for $k\geq1$, then $\la\in\mathcal{SC}^{(2k-1)}(n)$.
Otherwise, if $|D_1(\la)|-|D_3(\la)|=-k$ for $k\geq0$, then $\la\in\mathcal{SC}^{(2k)}(n)$.
Therefore, $\mathcal{SC}(n)=\bigcup\limits_{m=0}^{\infty}\mathcal{SC}^{(m)}(n)$. \\

We use the notation $sc^{(m)}(n)$ for $|\mathcal{SC}^{(m)}(n)|$. If we let $\mathcal{SC}^{(m)}=\bigcup_{n\geq 0} \mathcal{SC}^{(m)}(n)$, then  $\mathcal{SC}=\bigcup_{m\geq0} \mathcal{SC}^{(m)}$.\\ 

For a partition $\la$, we define the {\it disparity} of $\la$ by 
$${\rm dp}(\la)=|\{(i,j)\in\la~:~h(i,j) \text{~is odd}\}|-|\{(i,j)\in\la~:~h(i,j) \text{~is even}\}|.$$

For example, for the partition $\la=(4,4,4,3)$ given in Example \ref{ex:hook}, $|D_1(\la)|-|D_3(\la)|=-1$. Therefore, $\la$ is an element of $\mathcal{SC}^{(2)}(15)$ with ${\rm dp}(\la)=9-6=3$. \\

In the following proposition, we show that each element of the set $\mathcal{SC}^{(m)}(n)$ has the same disparity.

\begin{prop} \label{prop:oddeven}
For a nonnegative integer $m$, if $\la$ is in the set $\mathcal{SC}^{(m)}$, then its disparity ${\rm dp}(\la)$ is $\frac{m(m+1)}{2}$. 
\end{prop}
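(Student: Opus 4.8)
The plan is to prove the equivalent statement that ${\rm dp}(\la) = s(2s-1)$, where $s = |D_1(\la)| - |D_3(\la)|$; this reduces to the proposition once one checks the elementary identity $s(2s-1) = \frac{m(m+1)}{2}$ in the two cases $m = 2k-1$ (so $s = k$) and $m = 2k$ (so $s = -k$) coming from the definition of $\mathcal{SC}^{(m)}$. I would argue by induction on $d = |D(\la)|$, the number of boxes on the main diagonal. The base case $d = 0$ is the empty partition, for which both sides vanish. For the inductive step, let $\mu$ be the partition obtained from $\la$ by deleting its first row and first column. Since $\la$ is self-conjugate, so is $\mu$, and a short computation with the hook-length formula shows that every box $(i,j)$ of $\la$ with $i,j \ge 2$ keeps its hook length in $\mu$; in particular $D(\mu) = \{\delta_2,\dots,\delta_d\}$, so the inductive hypothesis applies to $\mu$.

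First I would isolate the disparity increment $\Delta := {\rm dp}(\la) - {\rm dp}(\mu)$, which is exactly the signed count of hook lengths over the boxes in the first row and first column of $\la$. Writing $\sigma(v) = 1$ if $v$ is odd and $\sigma(v) = -1$ if $v$ is even, self-conjugacy gives $h(1,j) = h(j,1)$, so the first row and the first column carry the same set of hook lengths, namely $\beta(\la)$. The corner box contributes its odd value $\delta_1 = \max \beta(\la)$ once, and every other value of $\beta(\la)$ is counted twice, whence
$$\Delta = 1 + 2\sum_{v \in \beta(\la) \setminus \{\delta_1\}} \sigma(v).$$

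The heart of the argument, and the step I expect to be most delicate, is evaluating this parity sum by means of Lemma \ref{lem:hook}. That lemma describes $\beta(\la)\setminus\{\delta_1\}$ as the union of $\{\frac{\delta_1+\delta_i}{2} : 2 \le i \le d\}$ with $\{1,2,\dots,\frac{\delta_1-1}{2}\}$ after deleting $\{\frac{\delta_1-\delta_i}{2} : 2 \le i \le d\}$. The key observation is that for each $i$ the two numbers $\frac{\delta_1+\delta_i}{2}$ and $\frac{\delta_1-\delta_i}{2}$ differ by the odd number $\delta_i$ and hence have opposite parity, so the added term and the deleted term combine into $2\,\sigma(\frac{\delta_1+\delta_i}{2})$; moreover $\sigma(\frac{\delta_1+\delta_i}{2}) = +1$ exactly when $\delta_1 \equiv \delta_i \pmod 4$. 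Together with the elementary value of $\sum_{v=1}^{(\delta_1-1)/2}\sigma(v)$ (which is $0$ or $1$ according as $\delta_1 \equiv 1$ or $3 \pmod 4$), this rewrites $\Delta$ in terms of $s' := |D_1(\mu)| - |D_3(\mu)|$. Splitting into the cases $\delta_1 \equiv 1$ and $\delta_1 \equiv 3 \pmod 4$ --- in which $s' = s-1$ and $s' = s+1$ respectively --- one finds $\Delta = 4s - 3$ and $\Delta = -4s - 1$, which are precisely $s(2s-1) - s'(2s'-1)$. Combined with the inductive hypothesis ${\rm dp}(\mu) = s'(2s'-1)$ this closes the induction. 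The bookkeeping of the residues modulo $4$ and keeping the signs straight across the two cases is the only real obstacle; everything else is a direct substitution.
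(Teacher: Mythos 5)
Your argument is correct, but it is organized quite differently from the paper's. The paper computes ${\rm dp}(\la)$ in one global pass: it splits the diagram into the $d\times d$ square of boxes $(i,j)$ with $i,j\le d$ (where $h(i,j)=\frac{\delta_i+\delta_j}{2}$, giving $d^2-4|D_1(\la)||D_3(\la)|$ for the signed count) and the legs/arms, whose multiset of hook lengths it writes down explicitly by applying Lemma \ref{lem:hook} to each column $H_j(\la)$, and then evaluates a closed formula $2d^2-d+2|D_3(\la)|-8|D_1(\la)||D_3(\la)|$ before splitting into the parity cases for $m$. You instead induct on $d$ by stripping the principal hook, which requires Lemma \ref{lem:hook} only for the full beta-set $\beta(\la)$ rather than for every $H_j$, and replaces the global bookkeeping by the single increment computation $\Delta=4s-3$ or $\Delta=-4s-1$; I checked both values and the telescoping identity $s(2s-1)-s'(2s'-1)$ in each case, and they are right (as are the parity facts $\sigma(\frac{\delta_1+\delta_i}{2})=-\sigma(\frac{\delta_1-\delta_i}{2})$ and the evaluation of $\sum_{v=1}^{(\delta_1-1)/2}\sigma(v)$, using that $\beta_{\le d}(\la)$ and $\beta_{>d}(\la)$ are disjoint so the signed sum splits as you claim). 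Your route is arguably cleaner and has the added benefit of foreshadowing the peel-off-the-principal-hook induction the paper itself uses later for Proposition \ref{prop:delete} and Theorem \ref{thm:second}; what it gives up is the explicit intermediate formula for ${\rm dp}(\la)$ in terms of $|D_1(\la)|$ and $|D_3(\la)|$, which the paper's direct computation produces along the way. One small presentational point: you should state explicitly (as you implicitly use) that deleting the first row and column of a self-conjugate $\la$ preserves the hook lengths of all boxes with $i,j\ge 2$, so that $\Delta$ really is the signed count over the first hook only; this is a one-line verification from $h(i,j)=\la_i+\la'_j-i-j+1$.
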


\begin{proof} Let $\la\in\mathcal{SC}^{(m)}$ be a self-conjugate partition with $D(\la)=\{\delta_1,\delta_2,\dots,\delta_d\}$. 

First, we focus on $d^2$ hook lengths $h(i,j)$ of $\la$ for $1\leq i,j \leq d$. Since $\delta_i=2\la_i-2i+1$ for $i\leq d$, 
$$h(i,j)=\la_i+\la_j-i-j+1=\frac{\delta_{i}+\delta_j}{2} \qquad \text{for} \quad 1\leq i,j \leq d.$$
Thus, among these $d^2$ hook lengths, there are $2|D_1(\la)||D_3(\la)|$ even numbers, and then 
$$|\{h(i,j) \text{: odd}~:~1\leq i,j\leq d\}|-|\{h(i,j) \text{: even}~:~1\leq i,j \leq d\}|=d^2-4|D_1(\la)||D_3(\la)|.$$ 

Now, we consider the multiset $H(\la)$ of hook lengths $h(i,j)$ of $\la$ for $i>d$ and $j\leq d$. If we let   
$$H_j(\la)=\{h(i,j)~:~d+1\leq i \leq \la_j\} \qquad \text{for} \quad 1\leq j\leq d,$$
then $H(\la)=\bigcup_{j=1}^{d} H_j(\la)$.
We note that $H_1(\la)$ is the set $\beta_{>d}(\la)$, $H_2(\la)$ is the set $\beta_{>{d-1}}(\bar{\la})$, where $\bar{\la}$ is the self-conjugate partition with $D(\bar{\la})=\{\delta_2,\delta_3,\dots,\delta_{d}\}$, and $H_3(\la),\dots,H_d(\la)$ are defined similarly. Hence, by Lemma \ref{lem:hook}, we have
\begin{eqnarray*}
H_1(\la)&=&\left\{\frac{\delta_1-1}{2},\frac{\delta_1-3}{2},\dots,1\right\}-\left\{\frac{\delta_1-\delta_d}{2},\frac{\delta_1-\delta_{d-1}}{2},\dots,\frac{\delta_{1}-\delta_{2}}{2}\right\},\\
H_2(\la)&=&\left\{\frac{\delta_2-1}{2},\frac{\delta_2-3}{2},\dots,1\right\}-\left\{\frac{\delta_2-\delta_d}{2},\frac{\delta_2-\delta_{d-1}}{2},\dots,\frac{\delta_{2}-\delta_{3}}{2}\right\},\\
& \vdots&\\
H_{d}(\la)&=&\left\{\frac{\delta_{d}-1}{2},\frac{\delta_{d}-3}{2},\dots,1\right\}.
\end{eqnarray*}
Thus, we can rewrite the multiset $H(\la)$ as follows.
$$H(\la)=\bigcup_{k=1}^{d}\left\{ \frac{\delta_{k}-1}{2}, \frac{\delta_{k}-3}{2}, \dots, 1 \right\}-\left\{ \frac{\delta_i - \delta_j}{2}~:~1\leq i<j\leq d\right\}.$$
We note that depend on whether $\delta_k \in D_1(\la)$ or $\delta_k \in D_3(\la)$, the number of odd elements is same as the number of even elements, or we have one more odd element in the set $\{ \frac{\delta_{k}-1}{2}, \frac{\delta_{k}-3}{2}, \dots, 1\}$.

Since there are $|D_1(\la)||D_3(\la)|$ odd elements in the multiset $\{ \frac{\delta_i - \delta_j}{2}~:~1\leq i<j\leq d\}$, we can conclude that among the hook lengths $h(i,j)$ with $i>d$ and $j\leq d$, 
\begin{eqnarray*}
|\{h(i,j):\text{odd}\}|-|\{h(i,j):\text{even}\}|&=&|D_3(\la)|-\left\{|D_1(\la)||D_3(\la)|-\left(\binom{d}{2}-|D_1(\la)||D_3(\la)|\right)\right\}.
\end{eqnarray*} 

Since $\la$ is self-conjugate, the multiset of hook lengths $h(i,j)$ with $i\leq d$ and $j>d$ is equal to the set $H(\la)$. Hence, the value $|\{h(i,j):\text{odd}\}|-|\{h(i,j):\text{even}\}|$ is equal to that one in the case when $i>d$ and $j\leq d$. By combining all of these, we have 
\begin{eqnarray*}
{\rm dp}(\la)&=&(d^2-4|D_1(\la)||D_3(\la)|)+2\left(|D_3(\la)|+\binom{d}{2}-2|D_1(\la)||D_3(\la)|\right) \\
&=&2d^2-d+2|D_3(\la)|-8|D_1(\la)||D_3(\la)|.
\end{eqnarray*}

Now, we consider two cases:

\begin{itemize}
\item Let $m=2k-1$ for a positive integer $k$. By the definition of $\mathcal{SC}^{(m)}$, $|D_1(\la)|-|D_3(\la)|=k$. Since $|D_1(\la)|+|D_3(\la)|=d$, we have $|D_1(\la)|=\frac{d+k}{2}$, $|D_3(\la)|=\frac{d-k}{2}$, and  
$${\rm dp}(\la)=2d^2-d+(d-k)-2(d^2-k^2)=2k^2-k=\frac{2k(2k-1)}{2}=\frac{(m+1)m}{2}.$$
\item Let $m=2k$ for a nonnegative integer $k$. In this case, $|D_1(\la)|-|D_3(\la)|=-k$. Hence, we have $|D_1(\la)|=\frac{d-k}{2}$, $|D_3(\la)|=\frac{d+k}{2}$, and
$${\rm dp}(\la)=2d^2-d+(d+k)-2(d^2-k^2)=2k^2+k=\frac{2k(2k+1)}{2}=\frac{m(m+1)}{2}.$$
\end{itemize}
This completes the proof.
\end{proof}

By Proposition \ref{prop:oddeven}, one may notice that the disparity of a self-conjugate partition is a triangular number $\frac{m(m+1)}{2}$ for some integer $m\geq 0 $, and the set of self-conjugate partitions with the disparity $\frac{m(m+1)}{2}$ is $\mathcal{SC}^{(m)}$.


\section{Proof of Theorem \ref{thm:main}} \label{sec:main}


The set of partitions of $n$ is denoted by $\mathcal{P}(n)$, and the set of partitions is denoted by $\mathcal{P}$. In this section, we construct a bijection between two sets $\mathcal{SC}^{(m)}(4n+m(m+1)/2)$ and $\mathcal{P}(n)$ which play a key role throughout the paper.\\ 

Before constructing a bijection, we give some notations.
For a self-conjugate partition $\la$, if
\begin{eqnarray*}
D_1(\la)&=&\{4a_1+1, 4a_2+1,\dots, 4a_r+1\},\\
D_3(\la)&=&\{4b_1-1, 4b_2-1,\dots, 4b_s-1\},
\end{eqnarray*}
we say that $\la$ has the {\it diagonal sequence pair} $((a_1,a_2,\dots,a_{r}),(b_1,b_2,\dots,b_{s}))$, where $a_1>a_2>\cdots>a_{r}\geq0$ and $b_1>b_2>\cdots>b_{s}\geq 1$. For convenience, we allow empty sequence if $r$ or $s$ is equal to $0$.

For $\la=(4,4,4,3)\in \mathcal{SC}^{(2)}(15)$ considered in Example \ref{ex:hook}, its diagonal sequence pair is $((1),(2,1))$. \\

We note that if $\la\in \mathcal{SC}^{(m)}(4n+m(m+1)/2)$ has the diagonal sequence pair $((a_1,\dots,a_{r}),(b_1,\dots,b_s))$, then 
$$r-s+(-1)^m \left\lceil \frac{m}{2} \right\rceil =0$$ 
and 
$$4\left(\sum_{i=1}^{r}a_i +\sum_{j=1}^{s}b_j\right)+r-s=4n+\frac{m(m+1)}{2}.$$
 
Now, we are ready to construct our mapping.\\

\textbf{Mapping $\phi^{(m)}_n$ : $\mathcal{SC}^{(m)}(4n+m(m+1)/2)~\rightarrow~\mathcal{P}(n)$}\\

Let $\la\in\mathcal{SC}^{(m)}(4n+m(m+1)/2)$ with the diagonal sequence pair $((a_1,\dots,a_r),(b_1,\dots,b_s))$. We define $\phi^{(m)}_n(\la)$ by the partition $\mu=(\mu_1, \mu_2,\dots, \mu_{\ell})$ such that 
$$\mu_i=a_{i}+i+s-r \qquad \text{for} \quad i \leq r,$$ 

\noindent and $(\mu_{r+1},\mu_{r+2}, \dots, \mu_{\ell})$ is the conjugate of the partition $\gamma=(b_1-s, b_2-s+1, \dots, b_{s}-1)$.

\noindent (We allow that $\gamma$ has some zero parts.) \\

In the figure below, the diagram after deleting the gray portion is the Young diagram of $\mu$.

\begin{figure}[ht]
\centering
\begin{tikzpicture}[scale=.43]
\filldraw[fill=gray!30,color=gray!30] (0,0) rectangle (3,7);

\foreach \i in {1,2,3,4,5,6}
 \draw[dashed] (\i,0) -- (\i,7-\i); 

\draw[thick] (0,0) -- (0,7)
                    (0,0) -- (7.03,0)
                    (1,7) -- (15,7) -- (15,6) --(2,6)
                    (14,6) -- (14,5) --(3,5)
                    (12,5) -- (12,4) --(4,4)
                    (12,4) -- (12,3) --(5,3)
                    (11,3) -- (11,2) --(6,2)
                    (10,2) -- (10,1) --(7,1)
                    (10,1) -- (10,0) --(6,0)
                    (3,0) -- (3,-6) -- (4,-6) --(4,0)
                    (4,-5) -- (5,-5) -- (5,0)
                    (5,-5) -- (6,-5) -- (6,0)
                    (6,-4) -- (7,-4) -- (7,0);

\draw[<->] (-0.5,0) -- (-0.5,4);
\draw[<->] (-0.5,4) -- (-0.5,7);
\draw[<->] (0,7.5) -- (3,7.5);
\draw[<->] (3,7.5) -- (7,7.5);

\node at (-1,2) {\begin{turn}{90} 
$s$
\end{turn}};
\node at (-1,5.5) {\begin{turn}{90} 
$r-s$
\end{turn}};
\node at (1.5, 8) {$r-s$};
\node at (5, 8) {$s$};
\node at (8.5, 0.45) {$a_r$};
\node at (8.5, 1.45) {$a_{r-1}$};
\node at (8.5, 2.45) {$a_{r-2}$};
\node at (8.5, 3.7) {$\vdots$};
\node at (8.5, 4.45) {$a_{3}$};
\node at (8.5, 5.45) {$a_{2}$};
\node at (8.5, 6.45) {$a_{1}$};

\node at (3.5, -2) {\begin{turn}{90} 
$b_1-s$
\end{turn}};
\node at (4.5, -2) {\begin{turn}{90} 
$b_2-s+1$
\end{turn}};
\node at (5.55,-2) {$\dots$};
\node at (6.5, -2) {\begin{turn}{90} 
$b_s-1$
\end{turn}};

\foreach \i in {0,1,2,3,4,5,6}
 \draw[thick] (0,\i+1) -- (7-\i,\i+1) --  (7-\i,\i); 

\node at (7.5,-7) {i) $r > s$}; 
\end{tikzpicture}
\quad\qquad\begin{tikzpicture}[scale=.43]

\foreach \i in {1,2,3}
 \draw[dashed] (\i,0) -- (\i,3); 

\draw[thick] (0,0) -- (0,3) -- (14,3) -- (14,2) -- (0,2); 
\draw[thick] (12,2) -- (12,1) -- (0,1); 
\draw[thick] (10,1) -- (10,0) -- (0,0); 
\draw[thick] (0,0) -- (0,-10) -- (1,-10) --(1,0); 
\draw[thick] (1,-8) -- (2,-8) -- (2,0); 
\draw[thick] (2,-7) -- (3,-7) -- (3,0); 
\draw[thick] (3,-7) -- (4,-7) -- (4,3);
\draw[thick] (4,-5) -- (5,-5) -- (5,0);  
\draw[thick] (5,-4) -- (6,-4) -- (6,0);
\draw[thick] (6,-4) -- (7,-4) -- (7,1);    
\draw[thick] (6,1) -- (6,2);    
\draw[thick] (5,2) -- (5,3);    
\draw[dashed] (5,2) -- (5,0);  
\draw[dashed] (6,1) -- (6,0);  

\node at (8.5,0.45) {$a_r$};
\node at (8.5,1.7) {$\vdots$};
\node at (8.5,2.45) {$a_1$};

\node at (0.5, -2) {\begin{turn}{90} 
$b_1-s$
\end{turn}};
\node at (1.5, -2) {\begin{turn}{90} 
$b_2-s+1$
\end{turn}};
\node at (2.5, -2) {\begin{turn}{90} 
$b_3-s+2$
\end{turn}};
\node at (3.55,-2) {$\dots$};
\node at (4.5, -2) {\begin{turn}{90} 
$b_{s-2}-3$
\end{turn}};
\node at (5.5, -2) {\begin{turn}{90} 
$b_{s-1}-2$
\end{turn}};
\node at (6.5, -2) {\begin{turn}{90} 
$b_s-1$
\end{turn}};

\draw[<->] (-0.5,0) -- (-0.5,3);
\draw[<->] (0,3.5) -- (4,3.5);
\draw[<->] (4,3.5) -- (7,3.5);

\node at (2, 4) {$s-r$};
\node at (5.5, 4) {$r$};
\node at (-1,1.5) {\begin{turn}{90} 
$r$
\end{turn}};

\node at (7,-11) {ii) $r \leq s$}; 
\end{tikzpicture}
\caption{Graphical interpretations of mapping $\phi^{(m)}_n$}
\end{figure}
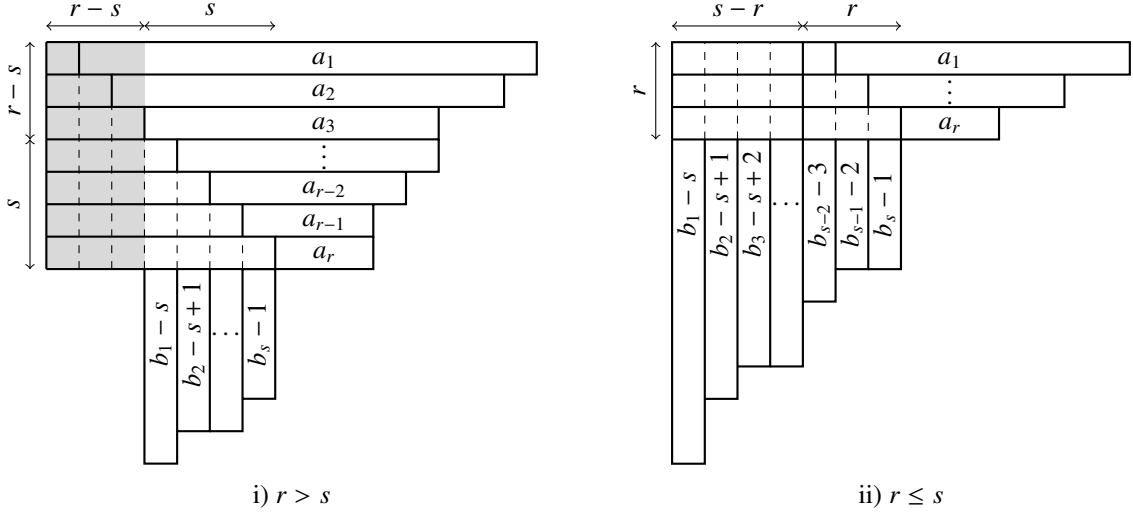

Now, we show that the mapping $\phi_n^{(m)}$ is well-defined:\\
Since $\mu_i=a_i+i+s-r$ for $i\leq r$, $\mu_r=a_r+r+s-r=a_r+s$. From $a_1>a_2>\cdots>a_r\geq 0$, we have $\mu_1\geq \mu_2\geq \cdots \geq \mu_r \geq s$. Since $\gamma $ has at most $s$ parts, $\mu_{r+1}\leq s$. Therefore, $\mu$ is a partition of
$$|\mu|=\sum_{i=1}^{r}(a_i+i+s-r) +\sum_{j=1}^{s}(b_j-s+j-1).$$

\begin{itemize}
\item  If $m=2k-1$, then $r-s=k$ and 
$$|\mu|=\left(\sum_{i=1}^{r}a_i +\sum_{j=1}^{s}b_{j}\right)-\frac{k(k-1)}{2}
=\frac{4n+2k^2-2k}{4}-\frac{k(k-1)}{2}=n.$$
\item If $m=2k$, then $r-s=-k$ and 
$$|\mu|=\left(\sum_{i=1}^{r}a_i +\sum_{j=1}^{s}b_{j}\right)-\frac{k(k+1)}{2}
=\frac{4n+2k^2+2k}{4}-\frac{k(k+1)}{2}=n.$$
\end{itemize}

\begin{thm}\label{thm:bijective}
For nonnegative integers $m$ and $n$, the mapping $\phi_n^{(m)}$ is bijective.
\end{thm}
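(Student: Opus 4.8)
The plan is to prove that $\phi_n^{(m)}$ is a bijection by exhibiting an explicit two-sided inverse $\psi_n^{(m)}\colon\mathcal{P}(n)\to\mathcal{SC}^{(m)}(4n+m(m+1)/2)$ that reads the diagonal sequence pair directly off a partition $\mu$. Write $c=(-1)^{m+1}\lceil m/2\rceil$, so the relation noted just before the construction of $\phi_n^{(m)}$ reads $r-s=c$. Given $\mu=(\mu_1,\mu_2,\dots)\in\mathcal{P}(n)$, extended by $\mu_i=0$ for $i>\ell(\mu)$, the key is to recover the index $r$ at which the ``head'' of $\mu$ (coming from the $a_i$) meets the ``tail'' (the conjugate of $\gamma$). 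I would set
\[
r=\max\{\,i\ge 1:\mu_i-i+c\ge 0\,\}
\]
(with $r=0$ if no such $i$ exists), put $s=r-c$, and then define $a_i=\mu_i-i+c$ for $1\le i\le r$, let $\gamma$ be the conjugate of the tail $(\mu_{r+1},\mu_{r+2},\dots)$ padded with zeros to $s$ parts, and set $b_j=\gamma_j+s-j+1$ for $1\le j\le s$. The output $\psi_n^{(m)}(\mu)$ is the self-conjugate partition with $D_1=\{4a_i+1\}$ and $D_3=\{4b_j-1\}$.

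The first point to establish is that $r$ is well-defined and the splitting is forced. Since $\mu_i-i$ is strictly decreasing in $i$ for any partition (and in the zero-padded range), the set $\{i:\mu_i-i+c\ge 0\}$ is an initial segment $\{1,\dots,r\}$, so $r$ is unambiguous. Moreover $s=r-c\ge 0$ always: when $m$ is even $c\le 0$ and this is automatic, while when $m$ is odd $c=\lceil m/2\rceil$ and the index $i=\lceil m/2\rceil$ lies in the set (as $\mu_{\lceil m/2\rceil}\ge 0=\lceil m/2\rceil-c$), forcing $r\ge\lceil m/2\rceil$. I would then check that $\psi_n^{(m)}$ lands in the correct set: the $a_i$ are strictly decreasing with $a_r=\mu_r-r+c\ge 0$ by the choice of $r$; since $r+1$ fails the defining inequality, $\mu_{r+1}<(r+1)-c=s+1$, so $\mu_{r+1}\le s$, whence $\gamma$ has at most $s$ parts and the $b_j=\gamma_j+s-j+1$ are strictly decreasing with $b_s=\gamma_s+1\ge 1$. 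The two residue classes make $D_1\cup D_3$ automatically a set of distinct odd integers, and $|D_1|-|D_3|=r-s=c$ places the result in $\mathcal{SC}^{(m)}$; the size $\sum_i(4a_i+1)+\sum_j(4b_j-1)=4(\sum_i a_i+\sum_j b_j)+r-s$ equals $4n+m(m+1)/2$ by reversing the computation already carried out for the well-definedness of $\phi_n^{(m)}$.

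Finally I would verify that $\psi_n^{(m)}$ and $\phi_n^{(m)}$ are mutually inverse. For $\psi_n^{(m)}\circ\phi_n^{(m)}=\mathrm{id}$, starting from $\la$ with diagonal pair $((a_i),(b_j))$ the head indices $i\le r$ satisfy $\mu_i-i+c=a_i\ge 0$ (since $\mu_i=a_i+i+s-r$ and $s-r=-c$), while for $i>r$ one computes $\mu_i-i+c\le \mu_{r+1}-(r+1)+c\le s-(r+1)+c=-1<0$; hence the recovered split index is exactly $r$, and $a_i=\mu_i-i+c$ together with reconjugating the tail returns the original $a_i$ and $b_j$. For $\phi_n^{(m)}\circ\psi_n^{(m)}=\mathrm{id}$, the head parts of the output are $a_i+i+s-r=(\mu_i-i+c)+i-c=\mu_i$, and its tail is the conjugate of $(b_j-s+j-1)_j=(\gamma_j)_j=\gamma$, i.e.\ the conjugate of the conjugate of the original tail, which is that tail itself. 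The main obstacle is precisely the well-definedness of the splitting: one must show that the single numerical condition $\mu_i-i+c\ge0$ cleanly separates head from tail in every case, including the degenerate ones where $r=0$ or $s=0$ (empty diagonal sequences) and where $\gamma$ must be padded with zero parts. Once the monotonicity of $\mu_i-i$ pins down $r$, the remaining verifications are the routine bookkeeping mirrored from the well-definedness argument for $\phi_n^{(m)}$.
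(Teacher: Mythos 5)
Your proposal is correct and takes essentially the same route as the paper: both construct the explicit inverse $\psi_n^{(m)}$ by locating the split index $r$ (your $r=\max\{i:\mu_i-i+c\ge 0\}$ with $c=(-1)^{m+1}\lceil m/2\rceil$ is exactly the paper's condition $\mu_r\ge r-c$, $\mu_{r+1}\le r-c$, unified across the two parity cases) and then reading off $a_i=\mu_i-i+c$ and $b_j=\gamma_j+s-j+1$. You supply the well-definedness and two-sided-inverse verifications that the paper leaves to the reader, but the underlying argument is the same.
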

\begin{proof}
We prove the theorem by constructing the inverse mapping $\psi_n^{(m)}$. 
Let $\mu=(\mu_1,\mu_2,\dots,\mu_{\ell})\in \mathcal{P}(n)$. 
\begin{itemize}
\item For $m=2k-1$, there exist a unique $s\geq0$ such that $\mu_{s+k}\geq s$ and $\mu_{s+k+1}\leq s$. Let $r=s+k$.
\item For $m=2k$, there exist a unique $r\geq0$ such that $\mu_{r}\geq r+k$ and $\mu_{r+1}\leq r+k$. Let $s=r+k$.
\end{itemize}
For any $m$, let $\gamma=(\gamma_1,\gamma_2,\dots,\gamma_s)$ be the conjugate of the partition $(\mu_{r+1},\dots,\mu_{\ell})$. Now, we define $\psi_n^{(m)}(\mu)$ by the partition $\la$ with the diagonal sequence pair $((a_1,\dots,a_{r}),(b_1,\dots,b_s))$, where
\begin{eqnarray*}
a_{i}&=&\mu_i-i-s+r \qquad \text{for} \quad i \leq r,\\
b_{j}&=&\gamma_j+s-j+1 \qquad \text{for} \quad j \leq s.
\end{eqnarray*}
Then $\psi_n^{(m)}$ is well-defined. Moreover, one can see that $\phi_n^{(m)}(\psi_n^{(m)}(\mu))=\mu$ and $\psi_n^{(m)}(\phi_n^{(m)}(\la))=\la$. Therefore, $\psi_n^{(m)}$ is the inverse of the mapping $\phi_n^{(m)}$.
\end{proof}

We define the bijection $\phi^{(m)}:\mathcal{SC}^{(m)} \rightarrow \mathcal{P}$ by $\phi^{(m)}_n(\la)$, for a partition $\la\in\mathcal{SC}^{(m)}$ of  $4n+\frac{m(m+1)}{2}$.\\
We say that $\mu$ is the {\it corresponding partition} of $\la$ when $\phi^{(m)}(\la)=\mu$. \\ 

We give two examples of the bijection $\phi^{(m)}$.

\begin{example} \label{ex:bijection}
We consider two self-conjugate partitions $\la$ and $\tilde{\la}$ with the set of main diagonal hook lengths $D(\la)=\{21,15,13,9,3,1\}$ and $D(\tilde{\la})=\{31,19,11,5\}$, respectively. 

\begin{itemize}
\item Since $D_1(\la)=\{21,13,9,1\}$ and $D_3(\la)=\{15,3\}$, $\la$ is in the set $\mathcal{SC}^{(3)}$ and $((5,3,2,0),(4,1))$ is the diagonal sequence pair of $\la$. If we let $\mu$ be the partition $\phi_{14}^{(3)}(\la)$, then 
$$\mu_1=5+1-2=4,\quad \mu_2=3+2-2=3, \quad \mu_3=2+3-2=3, \quad \mu_4=0+4-2=2$$ 
and $(\mu_{5},\mu_6,\dots)$ is the conjugate of the partition $(4-2,1-2+1)$. Therefore, $\mu=(4,3,3,2,1,1)$.
\item Since $D_1(\tilde{\la})=\{5\}$ and $D_3(\tilde{\la})=\{31,19,11\}$, $\tilde{\la} \in \mathcal{SC}^{(4)}$ and $((1),(8,5,3))$ is the diagonal sequence pair of $\tilde{\la}$. If we let $\tilde{\mu}$ be the partition $\phi_{14}^{(4)}(\tilde{\la})$, then $\mu_1=1+1+2=4$ and 
$(\mu_{2},\mu_3,\dots)$ is the conjugate of the partition $(8-3,5-3+1,3-3+2)$. Therefore, $\tilde{\mu}=(4,3,3,2,1,1)$.
\end{itemize}

For given $\mu\in\mathcal{P}$ and $m\geq0$, we consider the following diagram to find the correspondence of $\mu$. For convenience, even if $i\leq 0$, we set the $i$th column is the column on the left side of the $(i+1)$st column and the $i$th row is on the above of the $(i+1)$st row.
\begin{itemize}
\item For $m=2k-1$, we consider the diagram $\nu$ obtained from the Young diagram of $\mu$ by attaching $\frac{k(k-1)}{2}$ boxes on the left side such that $\nu$ has $\mu_i+k-i$ boxes in row $i$ for $i<k$ and $\mu_i$ boxes in row $i$ for $i\geq k$. 
Then, the number of (white) boxes $(i,j)$ in row $i$ such that $i-j<k$ is equal to $a_i$ and the number of (gray) boxes $(i,j)$ in column $j$ such that $i-j\geq k$ is equal to $b_j$. See the first diagram in Figure \ref{fig:graphic} for $\mu=(4,3,3,2,1,1)$  and $m=3$.  
\item For $m=2k$, we consider the diagram $\nu$ obtained from the Young diagram of $\mu$ by attaching $\frac{k(k+1)}{2}$ boxes on the above such that $\nu$ has $k-i$ boxes in row $-i$ for $i=0,1,\dots,k-1$ and $\mu_i$ boxes in row $i$ for $i>0$. 
Then, the number of (white) boxes $(i,j)$ in row $i$ such that $i-j<-k$ is equal to $a_i$ and the number of (gray) boxes $(i,j)$ in column $j$ such that $i-j\geq -k$ is equal to $b_j$. See the second diagram in Figure \ref{fig:graphic} for $\mu=(4,3,3,2,1,1)$  and $m=4$.   
\end{itemize}

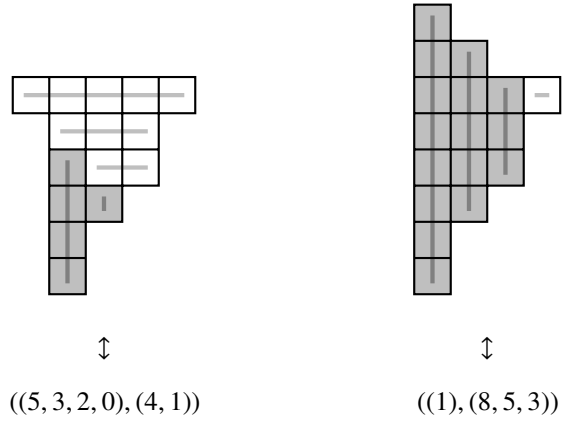
\begin{figure}[ht]
\centering
\begin{tikzpicture}[scale=.48]
\filldraw[fill=gray!50,color=gray!50] (0,-2) rectangle (1,-6)
                                                   (1,-3) rectangle (2,-4)
                                                   (10,2) rectangle (11,-6)
                                                   (11,1) rectangle (12,-4)
                                                   (12,0) rectangle (13,-3);

\draw[ultra thick,color=gray!50] (-0.7,-0.5) -- (3.7,-0.5);
\draw[ultra thick,color=gray!50] (0.3,-1.5) -- (2.7,-1.5);
\draw[ultra thick,color=gray!50] (1.3,-2.5) -- (2.7,-2.5);

\draw[ultra thick,color=gray] (0.5,-2.3) -- (0.5,-5.7);
\draw[ultra thick,color=gray] (1.5,-3.3) -- (1.5,-3.7);

\draw[thick] (0,0) -- (0,-6) --(1,-6) -- (1,0)
                    (0,-4) -- (2,-4) -- (2,0) 
                    (0,-3) -- (3,-3) -- (3,0)
                    (0,-1) -- (4,-1) -- (4,0) -- (0,0)
                    (0,-2) -- (3,-2)
                    (0,-5) -- (1,-5)
                    (0,0) -- (-1,0) -- (-1,-1) -- (0,-1);
 
\draw[ultra thick,color=gray] (10.5,1.7) -- (10.5,-5.7);
\draw[ultra thick,color=gray] (11.5,0.7) -- (11.5,-3.7);
\draw[ultra thick,color=gray] (12.5,-0.3) -- (12.5,-2.7);

\draw[ultra thick,color=gray!50] (13.3,-0.5) -- (13.7,-0.5);

\draw[thick] (10,0) -- (10,-6) --(11,-6) -- (11,0)
                    (10,-4) -- (12,-4) -- (12,0) 
                    (10,-3) -- (13,-3) -- (13,0)
                    (10,-1) -- (14,-1) -- (14,0) -- (10,0)
                    (10,-2) -- (13,-2)
                    (10,-5) -- (11,-5)
                    (10,0) -- (10,2) -- (11,2) -- (11,0)
                    (10,1) -- (12,1) -- (12,0);
               
\node at (1.5,-7.5) {$\updownarrow$};
\node at (12,-7.5) {$\updownarrow$};
\node at (1.5,-9) {$((5,3,2,0),(4,1))$};
\node at (12,-9) {$((1),(8,5,3))$};
\end{tikzpicture}
\caption{Graphical interpretations for odd $m$ and even $m$ of the bijection $\phi^{(m)}$}\label{fig:graphic}
\end{figure}

\end{example}

\begin{prop}\label{prop:size}    
For integer $m\geq 0$, the number of self-conjugate partitions of $n$ with the disparity $\frac{m(m+1)}{2}$ is
$$sc^{(m)}(n)=\begin{cases}
               p(k) & \text{if}~n=4k+\frac{m(m+1)}{2},\\
               0    & \text{otherwise}.
               \end{cases}$$
\end{prop}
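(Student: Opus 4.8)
The plan is to obtain the first case directly from the bijection of Theorem~\ref{thm:bijective}, and to dispose of the ``otherwise'' case by a residue argument on $|\la|$ modulo $4$. For the first case, fix an integer $k\geq 0$. Theorem~\ref{thm:bijective} supplies a bijection
$$\phi_k^{(m)}:\mathcal{SC}^{(m)}\!\left(4k+\tfrac{m(m+1)}{2}\right)\longrightarrow \mathcal{P}(k),$$
so that $sc^{(m)}\!\left(4k+\tfrac{m(m+1)}{2}\right)=|\mathcal{P}(k)|=p(k)$; this requires no further computation once the bijection is in hand.

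The remaining task is to show $\mathcal{SC}^{(m)}(N)=\emptyset$ whenever $N$ is not of the form $4k+\tfrac{m(m+1)}{2}$ with $k\geq 0$; equivalently, that every $\la\in\mathcal{SC}^{(m)}$ satisfies both $|\la|\equiv\tfrac{m(m+1)}{2}\pmod 4$ and $|\la|\geq\tfrac{m(m+1)}{2}$. First I would recall the standard fact that the size of a partition equals the sum of its principal (main diagonal) hook lengths, so that for $\la\in\mathcal{SC}^{(m)}$ with $D(\la)=\{\delta_1,\dots,\delta_d\}$ one has $|\la|=\sum_{i=1}^{d}\delta_i$. Splitting this sum over $D_1(\la)$ and $D_3(\la)$ and reducing modulo $4$ gives
$$|\la|\equiv |D_1(\la)|\cdot 1+|D_3(\la)|\cdot 3\equiv |D_1(\la)|-|D_3(\la)|\pmod 4.$$
By the definition of $\mathcal{SC}^{(m)}$ we have $|D_1(\la)|-|D_3(\la)|=(-1)^{m+1}\lceil m/2\rceil$, and a short check in the two cases $m=2k-1$ and $m=2k$ shows this equals $k$ and $-k$ respectively, each congruent to $\tfrac{m(m+1)}{2}$ modulo $4$. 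This is in fact the relation $4\left(\sum_i a_i+\sum_j b_j\right)+r-s=4n+\tfrac{m(m+1)}{2}$ already recorded just before the construction of $\phi_n^{(m)}$. For the lower bound, I would invoke Proposition~\ref{prop:oddeven}: the disparity $\mathrm{dp}(\la)=\tfrac{m(m+1)}{2}$ is a difference of two cell-counts summing to $|\la|$, whence $\tfrac{m(m+1)}{2}\leq|\la|$. Together these force $|\la|=4k+\tfrac{m(m+1)}{2}$ for some integer $k\geq 0$, so $sc^{(m)}(N)=0$ for all other $N$.

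I expect the only delicate point to be the passage from the disparity to the sharper congruence: the disparity alone pins down $|\la|$ only modulo $2$, via $|\la|=\tfrac{m(m+1)}{2}+2E$ where $E$ counts the even hook lengths. Refining this to a statement modulo $4$ is precisely where the finer $D_1/D_3$ decomposition of the diagonal hooks must enter through the identity $|\la|=\sum_i\delta_i$, rather than the disparity by itself. The accompanying verification that $(-1)^{m+1}\lceil m/2\rceil\equiv\tfrac{m(m+1)}{2}\pmod 4$ is routine, but I would carry it out separately for even and odd $m$ to be safe.
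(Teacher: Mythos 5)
Your proposal is correct, and it reaches the ``otherwise'' case by a genuinely different mechanism than the paper, even though both proofs get the count $p(k)$ identically from the bijection of Theorem~\ref{thm:bijective}. The paper's vanishing argument is a single box count: if $\ell$ is the number of boxes of $\la$ with even hook length, Proposition~\ref{prop:oddeven} gives $|\la|=2\ell+\frac{m(m+1)}{2}$, and $\ell$ is even because self-conjugacy pairs each off-diagonal even-hook box $(i,j)$ with $(j,i)$ while the diagonal boxes all have odd hooks; writing $\ell=2k$ delivers the congruence and the bound $k\geq 0$ simultaneously. You instead split the job in two: the congruence comes from the principal-hook decomposition $|\la|=\sum_i\delta_i\equiv|D_1(\la)|-|D_3(\la)|\pmod 4$ together with the (correct) check that $(-1)^{m+1}\lceil m/2\rceil\equiv\frac{m(m+1)}{2}\pmod 4$ in both parities of $m$ --- this is just a repackaging of the identity $4\bigl(\sum_i a_i+\sum_j b_j\bigr)+r-s=4n+\frac{m(m+1)}{2}$ recorded in Section~\ref{sec:main} --- and the lower bound comes separately from $\mathrm{dp}(\la)\leq|\la|$. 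Both routes are sound; the paper's is shorter, while yours correctly isolates the one subtle point, namely that the disparity alone pins down $|\la|$ only modulo $2$, so the finer $D_1/D_3$ information on the diagonal hooks (or, in the paper's version, the transpose pairing forcing $\ell$ even) is genuinely needed to upgrade to a statement modulo $4$.
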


\begin{proof}
For a self-conjugate partition $\la$ with the disparity $\frac{m(m+1)}{2}$, suppose there are $\ell$ boxes $(i,j)$ in the Young diagram of $\la$ such that $h(i,j)$ is even. By Proposition \ref{prop:oddeven}, there are $\ell+\frac{m(m+1)}{2}$ boxes with odd hook lengths. Since $\la$ is self-conjugate, $\ell$ must be even. If we let $\ell=2k$, then $n=4k+\frac{m(m+1)}{2}$. Bijection $\phi_n^{(m)}$ gives that $sc^{(m)}\left(4k+\frac{m(m+1)}{2}\right)=p(k)$.
\end{proof}

By Theorem \ref{thm:bijective} and Proposition \ref{prop:size}, we have the following corollary and as a consequence of Corollary \ref{cor:first}, we have Corollary \ref{cor:nocore}.

\begin{cor}\label{cor:first}
For a nonnegative integer $m$, we have 
$$\sum_{\la\in\mathcal{SC}^{(m)}} q^{|\la|}=q^{\frac{m(m+1)}{2}}\sum_{\mu\in\mathcal{P}}q^{4|\mu|}.$$
\end{cor}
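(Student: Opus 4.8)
The plan is to read the identity directly off the weight-preserving properties of the bijection $\phi^{(m)}$, so that almost all the work is already contained in Theorem~\ref{thm:bijective} together with Propositions~\ref{prop:oddeven} and~\ref{prop:size}. The essential observation is that the bijection is \emph{graded}: $\phi^{(m)}$ restricts, for each $n\geq 0$, to a bijection $\phi_n^{(m)}\colon \mathcal{SC}^{(m)}(4n+m(m+1)/2)\to\mathcal{P}(n)$, so a self-conjugate partition $\la\in\mathcal{SC}^{(m)}$ and its corresponding partition $\mu=\phi^{(m)}(\la)$ satisfy the exact size relation $|\la|=4|\mu|+\frac{m(m+1)}{2}$.

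First I would record that, by Proposition~\ref{prop:oddeven}, every $\la\in\mathcal{SC}^{(m)}$ has disparity $\frac{m(m+1)}{2}$, and by the parity argument appearing in the proof of Proposition~\ref{prop:size} its size is forced to have the form $4k+\frac{m(m+1)}{2}$ for some nonnegative integer $k$. Consequently $\mathcal{SC}^{(m)}=\bigsqcup_{n\geq 0}\mathcal{SC}^{(m)}\!\left(4n+\frac{m(m+1)}{2}\right)$, and the left-hand generating function can be split along this grading.

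Then I would substitute the bijection. Since $\phi_n^{(m)}$ is a bijection for every $n$ by Theorem~\ref{thm:bijective}, and it shifts the exponent of $q$ according to $|\la|=4|\mu|+\frac{m(m+1)}{2}$, reindexing the sum over $\la$ by $\mu=\phi^{(m)}(\la)$ gives
$$\sum_{\la\in\mathcal{SC}^{(m)}}q^{|\la|}=\sum_{\mu\in\mathcal{P}}q^{4|\mu|+\frac{m(m+1)}{2}}=q^{\frac{m(m+1)}{2}}\sum_{\mu\in\mathcal{P}}q^{4|\mu|}.$$
Equivalently, one can run the computation purely by counting: $\sum_{\la\in\mathcal{SC}^{(m)}}q^{|\la|}=\sum_{k\geq 0}sc^{(m)}\!\left(4k+\frac{m(m+1)}{2}\right)q^{4k+m(m+1)/2}$, and inserting $sc^{(m)}\!\left(4k+\frac{m(m+1)}{2}\right)=p(k)$ from Proposition~\ref{prop:size} yields the same conclusion once one recognizes $\sum_{k\geq 0}p(k)q^{4k}=\sum_{\mu\in\mathcal{P}}q^{4|\mu|}$.

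Because the heavy lifting already resides in the established bijection, there is no genuine obstacle; the only point demanding care is bookkeeping of the exponent, namely checking that the constant shift $\frac{m(m+1)}{2}$ and the factor $4$ multiplying $|\mu|$ are precisely those dictated by the domain $\mathcal{SC}^{(m)}(4n+m(m+1)/2)$ of $\phi_n^{(m)}$. Verifying this ensures the reindexing is legitimate and that the two generating functions converge and agree coefficient-by-coefficient for $|q|<1$.
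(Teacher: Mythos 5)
Your proof is correct and follows essentially the same route as the paper, which derives Corollary \ref{cor:first} directly from Theorem \ref{thm:bijective} and Proposition \ref{prop:size} without further elaboration. Your explicit bookkeeping of the grading $\mathcal{SC}^{(m)}=\bigsqcup_{n\geq 0}\mathcal{SC}^{(m)}\bigl(4n+\frac{m(m+1)}{2}\bigr)$ and of the exponent shift $|\la|=4|\mu|+\frac{m(m+1)}{2}$ simply makes precise what the paper leaves implicit.
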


  
\section{Relations between hook lengths of $\mathcal{SC}^{(m)}$ and $\mathcal{P}$}\label{sec:hook}


In this section we provide some relations between hook lengths of $\la\in \mathcal{SC}^{(m)}$ and that of $\phi^{(m)}(\la)\in \mathcal{P}$. 


\subsection{Hook lengths of the first row or column}        
                        

For the partitions $\la\in \mathcal{SC}^{(m)}$ and $\mu=\phi^{(m)}(\la)$, we give a relation between their hook lengths in the first row or in the first column.

\begin{lem} \label{lem:firsthook}
For a partition $\mu=(\mu_1,\mu_2,\dots,\mu_{\ell})$, if we let $\bar{\beta}(\mu)=\{h(1,1)-h(i,1)~:~2\leq i \leq \ell\}$, then 
$$\bar{\beta}(\mu)=\{1,2,\dots,h(1,1)\}-\beta(\mu'),$$
where $\mu'$ is the conjugate of $\mu$.
\end{lem}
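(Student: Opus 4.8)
The plan is to prove the identity by a short counting argument: I will show that $\bar{\beta}(\mu)$ and $\beta(\mu')$ are \emph{disjoint} subsets of $\{1,2,\dots,h(1,1)\}$ whose cardinalities add up to $h(1,1)$, which forces them to be complementary. Throughout I assume $\mu$ has exactly $\ell$ nonzero parts, so that $\mu'$ has exactly $\mu_1$ parts.

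First I would record the relevant hook-length formulas. In the first column of $\mu$ we have $h(i,1)=\mu_i+\ell-i$, so that $h(1,1)=\mu_1+\ell-1$, while in the first column of $\mu'$ we have $h_{\mu'}(j,1)=\mu'_j+\mu_1-j$ for $1\le j\le \mu_1$. From these the containments are immediate: the first-column hook lengths are strictly decreasing and positive, so each element $h(1,1)-h(i,1)$ of $\bar{\beta}(\mu)$ lies strictly between $0$ and $h(1,1)$, and each element of $\beta(\mu')$ lies in $\{1,\dots,h(1,1)\}$. Since $h(i,1)$ is strictly decreasing in $i$, the $\ell-1$ differences comprising $\bar{\beta}(\mu)$ are distinct, and $\beta(\mu')$ consists of $\mu_1$ distinct values; hence $|\bar{\beta}(\mu)|+|\beta(\mu')|=(\ell-1)+\mu_1=\mu_1+\ell-1=h(1,1)$, which is exactly $|\{1,\dots,h(1,1)\}|$.

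The crux is disjointness, and this is the step I expect to carry the real content. Suppose some value lay in both sets, say $h(1,1)-h(i,1)=h_{\mu'}(j,1)$ with $2\le i\le \ell$ and $1\le j\le \mu_1$. Substituting the formulas above and cancelling, this equation collapses to $\mu_i+\mu'_j-i-j+1=0$. But the left-hand side is precisely the hook-length expression $h_\mu(i,j)$, which can never vanish — this is exactly the fact exploited in the proof of Lemma \ref{lem:hook}, where it is noted that $\mu_i+\mu'_j-i-j+1\neq 0$ even when $(i,j)$ is not a box of the diagram. This contradiction establishes $\bar{\beta}(\mu)\cap\beta(\mu')=\emptyset$.

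Finally I would assemble the pieces: two disjoint subsets of the $h(1,1)$-element set $\{1,2,\dots,h(1,1)\}$ whose cardinalities sum to $h(1,1)$ must partition it, whence $\bar{\beta}(\mu)=\{1,2,\dots,h(1,1)\}-\beta(\mu')$. The only delicate point is the algebraic reduction of the coincidence equation to a vanishing hook length; once that simplification is in hand, the rest is bookkeeping. An alternative route would invoke the classical Maya-diagram complementation between first-row and first-column hook lengths together with the reflection $x\mapsto h(1,1)-x$, but the counting argument above is shorter and reuses machinery already present in the paper.
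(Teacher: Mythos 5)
Your proposal is correct and is essentially the paper's own argument: both establish disjointness of $\bar{\beta}(\mu)$ and $\beta(\mu')$ by reducing a hypothetical coincidence to the vanishing of the hook-length expression $\mu_i+\mu'_j-i-j+1$, and then conclude by counting cardinalities. The only cosmetic difference is that you phrase the elements of $\beta(\mu')$ as first-column hooks of $\mu'$ rather than first-row hooks of $\mu$, which changes nothing.
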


\begin{proof}
For some $i$ and $j$, if $h(1,1)-h(i,1)=h(1,j)$, then $\mu_i+\mu_j'-i-j+1=0$. Since any hook length $h(i,j)$ can not be zero, $h(1,1)-h(i,1)\neq h(1,j)$ for all $i$ and $j$. This implies that $\bar{\beta}(\mu)\subset \{1,2,\dots,h(1,1)\}-\beta(\mu')$.
Since $|\bar{\beta}(\mu)|=h(1,1)-\mu_1$, we are done.
\end{proof}
        
For a self-conjugate partition $\la$, we define the {\it half-even beta set} of $\la$ by
$$\beta_{e/2}(\la)=\{h(i,1)/2~:~h(i,1)~\text{is even},~1\leq i \leq \la_1\}.$$             
                
\begin{prop} \label{prop:evenhook}  
Let $\la\in\mathcal{SC}^{(m)}$ be a partition with $D(\la)=\{\delta_1,\delta_2,\dots,\delta_d\}$. If $\mu=(\mu_1,\mu_2,\dots,\mu_{\ell})$ is the corresponding partition of $\la$, then the half-even beta set of $\la$ is
$$\beta_{e/2}(\la)=\begin{cases}
                   \beta(\mu') &\text{if}\quad \delta_1\in D_1(\la),\\
                   \beta(\mu)  &\text{if}\quad \delta_1\in D_3(\la).
                   \end{cases}$$  
\end{prop}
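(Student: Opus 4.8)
The plan is to compute each side explicitly in terms of the diagonal sequence pair $((a_1,\dots,a_r),(b_1,\dots,b_s))$ of $\la$ and to match the two, treating the cases $\delta_1\in D_1(\la)$ and $\delta_1\in D_3(\la)$ separately (equivalently, $a_1\geq b_1$ versus $a_1<b_1$ when both sequences are nonempty).

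First I would extract $\beta_{e/2}(\la)$ from Lemma \ref{lem:hook}. The decisive parity remark is that, for $i\geq 2$, the numbers $\frac{\delta_1+\delta_i}{2}\in\beta_{\leq d}(\la)$ and $\frac{\delta_1-\delta_i}{2}$ (the entry removed from $\beta_{>d}(\la)$) add up to the odd number $\delta_1$, so exactly one of them is even, and $\frac{\delta_1+\delta_i}{2}$ is even precisely when $\delta_1$ and $\delta_i$ lie in different classes. Writing $\delta_1=4a_1+1$ when $\delta_1\in D_1(\la)$, the even entries of $\beta_{\leq d}(\la)$ are then the $\frac{\delta_1+\delta_i}{2}$ with $\delta_i\in D_3(\la)$, which halve to $a_1+b_j$, while the even entries of $\beta_{>d}(\la)$ halve to $\{1,\dots,a_1\}\setminus\{a_1-a_i : 2\leq i\leq r\}$. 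This gives the closed form
$$\beta_{e/2}(\la)=\{a_1+b_j : 1\leq j\leq s\}\cup\bigl(\{1,\dots,a_1\}\setminus\{a_1-a_i : 2\leq i\leq r\}\bigr),$$
together with the mirror-image formula when $\delta_1=4b_1-1\in D_3(\la)$, obtained by swapping the roles of the two sequences and shortening the interval to $\{1,\dots,b_1-1\}$ (the $-1$ reflecting $4b_j-1$ versus $4a_i+1$).

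Next I would compute the target set directly from $\mu=\phi^{(m)}(\la)$, whose first $r$ rows are $\mu_i=a_i+i+s-r$ and whose remaining rows form the conjugate $\gamma'$ of $\gamma=(b_1-s,\dots,b_s-1)$. In the case $\delta_1\in D_3(\la)$ I would use that $\mu$ has $\ell=r+(b_1-s)$ parts, so the first-column hook lengths $\mu_i+\ell-i$ split into the top-block contribution $\{a_i+b_1 : i\leq r\}$ and the lower-block set $\beta(\gamma')$. It then remains to identify $\beta(\gamma')$ with the complemented interval $\{1,\dots,b_1-1\}\setminus\{b_1-b_j : 2\leq j\leq s\}$, which is exactly Lemma \ref{lem:firsthook} applied to $\gamma$ once one checks $h_\gamma(1,1)=b_1-1$ and $h_\gamma(i,1)=b_i-1$. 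For $\delta_1\in D_1(\la)$ I would run the transpose version: here $\beta(\mu')$ is wanted, its first $s$ first-column hook lengths work out to $a_1+b_j$ (each top-block part $\mu_i$ being at least $s$), and the remaining ones coincide with the below-$a_1$ part of $\beta(\alpha')$ for the top block $\alpha=(a_1+1+s-r,\dots,a_r+s)$, which Lemma \ref{lem:firsthook} applied to $\alpha$ (with $\beta(\alpha)=\{a_i+s\}$) turns into $\{1,\dots,a_1\}\setminus\{a_1-a_i\}$.

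The main obstacle I anticipate is the clean separation of $\mu$ (or $\mu'$) into its two blocks and the verification that the lower block contributes precisely a complemented interval with no collisions across the two blocks; this is where Lemma \ref{lem:firsthook} does the real work, converting the easily computed first-column hook set of $\gamma$ (respectively $\alpha$) into the set difference appearing in $\beta_{e/2}(\la)$. I would also need to treat the degenerate configurations — an empty $a$- or $b$-sequence, and the boundary value $b_s=1$, which makes $\gamma$ end in a zero part — to confirm that the count $\ell=r+b_1-s$ and the hook-length computations remain valid there.
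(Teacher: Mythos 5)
Your proposal is correct and follows essentially the same route as the paper: both extract $\beta_{e/2}(\la)$ from Lemma \ref{lem:hook} via the parity observation that $\frac{\delta_1+\delta_i}{2}$ is even exactly when $\delta_1$ and $\delta_i$ lie in different residue classes, and both compute $\beta(\mu)$ and $\beta(\mu')$ by splitting $\mu$ into its top block and the conjugate of $\gamma$ and applying Lemma \ref{lem:firsthook} to each block. The degenerate configurations you flag (zero parts of $\gamma$, i.e.\ $b_i=s-i+1$) are precisely what the paper handles by tracking the number $u$ of nonzero parts and noting $\{b_1-b_i : u<i\leq s\}=\{b_1-s+i-1 : u<i\leq s\}$, so your outline closes without any new idea.
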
            

\begin{proof}
We let $((a_1,\dots,a_{r}),(b_1,\dots,b_s))$ be the diagonal sequence pair of $\la$ so that
$$D(\la)=\{4a_1+1,4a_2+1,\dots,4a_{r}+1\}\cup\{4b_1-1,4b_2-1,\dots,4b_s-1\}.$$

First we will find two sets $\beta(\mu)$ and $\beta(\mu')$. The partition $\phi^{(m)}(\la)=\mu$ is defined by 
$$\mu_i=a_{i}+i+s-r \qquad \text{for} \quad i \leq r,$$ 
and $(\mu_{r+1}, \dots, \mu_{\ell})$ is the conjugate of the partition $\gamma=(b_1-s, b_2-s+1, \dots, b_s-1)$. We suppose that $\gamma$ has $u$ nonzero parts so that
$b_{i}-s+i-1=0$ for $i>u$. 

To find $\beta(\mu)$, we consider hook lengths of boxes $(i,1)$ in the Young diagram of $\mu$. For $i\leq r$,
$$h_{\mu}(i,1)=\mu_i+\mu_1'-i=(a_i+i+s-r)+(r+b_1-s)-i=a_i+b_1.$$
We note that the set $\{h_{\mu}(i,1)~:~i>r\}$ is equal to the set $\beta(\gamma')$, where $\gamma'$ is the conjugate of $\gamma$. Since
$$h_{\gamma}(i,1)=\gamma_i+\gamma'_1-i=(b_i-s+i-1)+u-i=b_i-s+u-1,$$
we have
$$\bar{\beta}(\gamma)=\{h_{\gamma}(1,1)-h_{\gamma}(i,1)~:~2\leq i \leq u\}
                   =\{b_1-b_i~:~2\leq i \leq u \}.$$
By Lemma \ref{lem:firsthook},
$$\beta(\gamma')=\{1,2,\dots,h_{\gamma}(1,1)\}-\bar{\beta}(\gamma)
                   =\{1,2,\dots,b_1-s+u-1\}-\{b_1-b_i~:~2\leq i \leq u \}.$$
Since $\{b_1-b_i~:~u<i\leq s\}=\{b_1-s+i-1~:~u<i\leq s\}$, we can write as 
$$\beta(\gamma')=\{1,2,\dots,b_1-1\}-\{b_1-b_i~:~2\leq i \leq s \},$$  
and therefore, the beta set of $\mu$ is 
$$\beta(\mu)=\{a_i+b_1~:~1\leq i \leq r\}\cup\{1,2,\dots,b_1-1\}-\{b_1-b_i~:~2\leq i \leq s\}.$$

To find the set $\beta(\mu')$, we consider hook lengths of boxes $(1,j)$ in the Young diagram of $\mu$.\\  
For $j\leq s$, since $\mu'_j=r+\gamma_j$, 
$$h_{\mu}(1,j)=\mu_1+\mu_j'-j=(a_1+1+s-r)+(r+b_j-s+j-1)-j=a_1+b_j.$$

We note that if we set $\nu=(\mu_1-s, \mu_2-s, \dots, \mu_r-s)$, then the set $\{h_{\mu}(1,j)~:~j>s\}$ is equal to the set $\beta(\nu')$, where $\nu'$ is the conjugate of $\nu$. We suppose that $\nu$ has $v$ nonzero parts so that $a_i+i-r=0$ for $i>v$. Since 
$$h_{\nu}(i,1)=\nu_i+\nu'_1-i=(\mu_i-s)+v-i=(a_i+i+s-r-s)+v-i=a_i-r+v,$$ 
we have
$$\bar{\beta}(\nu)=\{h_{\nu}(1,1)-h_{\nu}(i,1)~:~2\leq i \leq v\}
=\{a_1-a_i~:~2\leq i \leq v\}.$$
By Lemma \ref{lem:firsthook},
$$\beta(\nu')=\{1,2,\dots,h_{\nu}(1,1)\}-\bar{\beta}(\nu)
             =\{1,2,\dots,a_1-r+v\}-\{a_1-a_i~:~2\leq i \leq v \}.$$  
Since $\{a_1-a_i~:~v<i\leq r\}=\{a_1-r+i~:~v<i\leq r\}$, we can write as 
$$\beta(\nu')=\{1,2,\dots,a_1\}-\{a_1-a_i~:~2\leq i \leq r \},$$  
and therefore, we have the beta set of $\mu'$,
$$\beta(\mu')=\{a_1+b_j~:~1\leq j \leq s\}\cup\{1,2,\dots,a_1\}-\{a_1-a_i~:~2\leq i \leq r \}.$$

\noindent Now, we consider the half-even beta set $\beta_{e/2}(\la)$. By Lemma \ref{lem:hook}, we have  
$$\beta(\la)=\left\{\frac{\delta_1+\delta_1}{2},\frac{\delta_1+\delta_2}{2},\dots,\frac{\delta_1+\delta_d}{2}\right\}
\bigcup \left\{\frac{\delta_1-1}{2},\frac{\delta_1-3}{2},\dots,1\right\}-\left\{\frac{\delta_1-\delta_d}{2},\frac{\delta_1-\delta_{d-1}}{2},\dots,\frac{\delta_1-\delta_2}{2}\right\}.$$

We consider two cases:
\begin{enumerate}
\item[]\textbf{Case 1)} Let $\delta_1\in D_1(\la)$ so that $\delta_1=4a_1+1$. Then, 
$$\beta_{e/2}(\la)=\{a_1+b_j~:~1\leq j \leq s\}\cup\{1,2,\dots,a_1 \}-\{a_1-a_i~:~2\leq i \leq r\}=\beta(\mu').$$
\item[]\textbf{Case 2)} Let $\delta_1\in D_3(\la)$ so that $\delta_1=4b_1-1$. Then, 
$$\beta_{e/2}(\la)=\{b_1+a_i~:~1\leq i\leq r\}\cup\{1,2,\dots,b_1-1\}-\{b_1-b_i~:~2\leq i \leq s\}=\beta(\mu).$$
\end{enumerate}
\end{proof}        
  
\begin{example} Let $\la$, $\tilde{\la}$ be self-conjugate partitions we considered in Example \ref{ex:bijection}. We remind that $\phi^{(3)}(\la)=\phi^{(4)}(\tilde{\la})=\mu=(4,3,3,2,1,1)$. We note that $h_{\la}(1,1)=21\in D_{1}(\la)$ and $h_{\tilde{\la}}(1,1)=31\in D_{3}(\tilde{\la})$. As in Proposition \ref{prop:evenhook}, $\beta_{e/2}(\la)=\beta(\mu')=\{9,6,4,1\}$ and $\beta_{e/2}(\tilde{\la})=\beta(\mu)=\{9,7,6,4,2,1\}$. See Figure \ref{fig:hookrelation} for the Young diagrams of $\mu$, $\la$, $\tilde{\la}$, and their hook lengths. 
\end{example}
\begin{figure}[ht!]
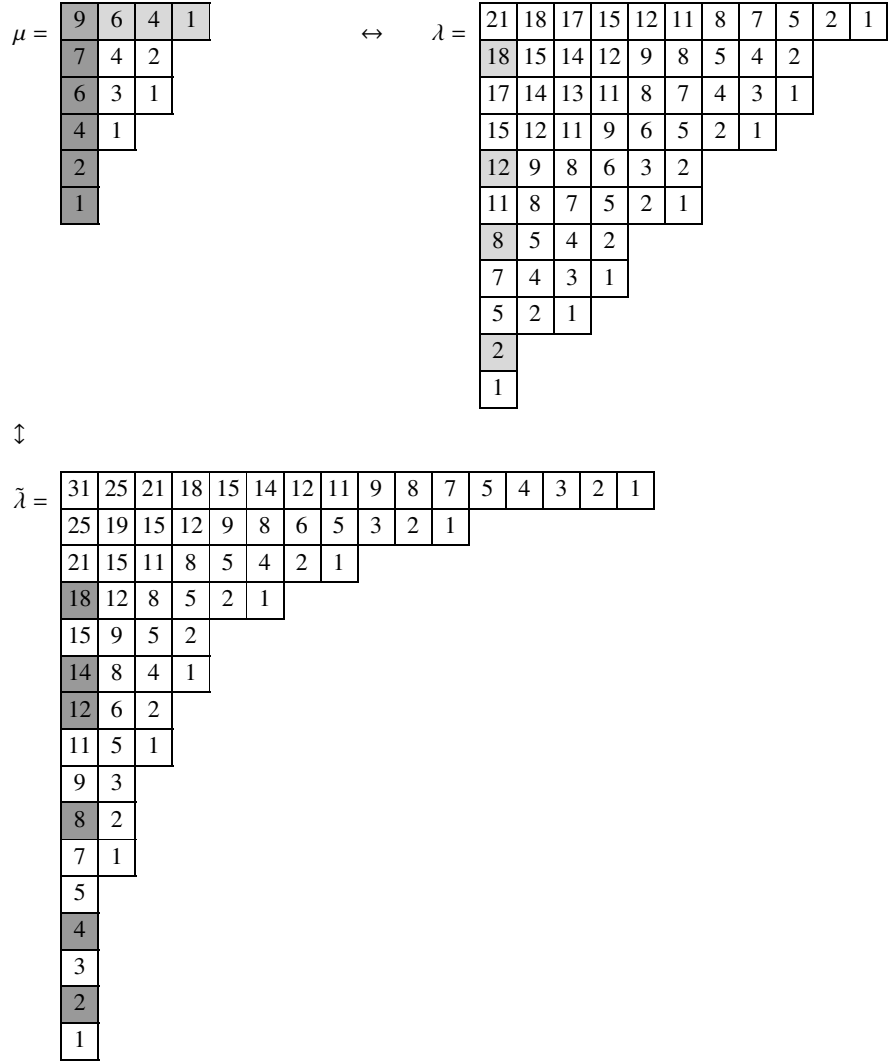

\small
\qquad \qquad \qquad
$\qquad\mu=~$\begin{ytableau}      
*(gray!65)9&*(gray!30)6&*(gray!30)4&*(gray!30)1\cr
*(gray!80)7&4&2\cr
*(gray!80)6&3&1\cr
*(gray!80)4&1\cr
*(gray!80)2\cr
*(gray!80)1\cr
\end{ytableau}
\qquad\qquad\qquad $\leftrightarrow$\qquad $\la=~$\begin{ytableau}
21&18&17&15&12&11&8&7&5&2&1\cr
*(gray!30)18&15&14&12&9&8&5&4&2\cr
17&14&13&11&8&7&4&3&1\cr
15&12&11&9&6&5&2&1\cr
*(gray!30)12&9&8&6&3&2\cr
11&8&7&5&2&1\cr
*(gray!30)8&5&4&2\cr
7&4&3&1\cr
5&2&1\cr
*(gray!30)2\cr
1\cr
\end{ytableau}\\

$\qquad\qquad\qquad\qquad\updownarrow$\\

$\qquad\qquad\qquad\qquad\tilde{\la}=~$\begin{ytableau}
31&25&21&18&15&14&12&11&9&8&7&5&4&3&2&1\cr
25&19&15&12&9&8&6&5&3&2&1\cr
21&15&11&8&5&4&2&1\cr
*(gray!80)18&12&8&5&2&1\cr
15&9&5&2\cr
*(gray!80)14&8&4&1\cr
*(gray!80)12&6&2\cr
11&5&1\cr
9&3\cr
*(gray!80)8&2\cr
7&1\cr
5\cr
*(gray!80)4\cr
3\cr
*(gray!80)2\cr
1\cr
\end{ytableau}
\caption{Hook length relations between corresponding partitions}\label{fig:hookrelation}
\end{figure}
  

\subsection{Relations between hook lengths of $\mathcal{SC}^{(m)}$ and $\mathcal{P}$}        
                        
  
One may notice that there are more relations between hook lengths of corresponding partitions from Figure \ref{fig:hookrelation}. Now, we give our main theorem.
 
\begin{thm}\label{thm:second} Let $\la\in \mathcal{SC}^{(m)}$ be a self-conjugate partition and let $\mu$ be its corresponding partition. For each positive integer $k$, the number of boxes $(i,j)$ with $h_{\la}(i,j)=2k$ is equal to twice the number of boxes $(\tilde{i},\tilde{j})$ with $h_{\mu}(\tilde{i},\tilde{j})=k$.  
\end{thm}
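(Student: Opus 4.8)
The plan is to pass to first‑column hook lengths (beta‑sets) and reduce the statement to a parity‑matching bijection. Write $n_\nu(h)=\#\{(i,j)\in\nu : h_\nu(i,j)=h\}$, so the goal is $n_\la(2k)=2\,n_\mu(k)$. I will use the standard description of hook multiplicities via the beta‑set: for any partition $\nu$,
$$n_\nu(h)=\#\{\,b\in\beta(\nu): b\ge h,\ b-h\notin\beta(\nu)\,\},$$
which records that the hook lengths occurring in the row with first‑column hook $b$ are exactly $\{b-g: 0\le g<b,\ g\notin\beta(\nu)\}$. This count is invariant under the bead‑shift $\beta\mapsto\{b+1\}\cup\{0\}$, so it applies to any set of distinct nonnegative integers representing $\nu$. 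Applying it to $\la$ and splitting according to the parity of $b$, and noting that $b-2k$ has the same parity as $b$, gives $n_\la(2k)=E+O$, where $E$ (resp. $O$) counts the contributing $b\in\beta(\la)$ that are even (resp. odd).

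The first key step is to record a symmetry of $\beta(\la)$ forced by self‑conjugacy. Taking the partition in Lemma \ref{lem:firsthook} to be $\la$ and using $\beta(\la')=\beta(\la)$ yields $\{\delta_1-h(i,1): i\ge 2\}=\{1,\dots,\delta_1\}\setminus\beta(\la)$, which rearranges into the self‑complementarity statement: for every $x\in\{0,1,\dots,\delta_1\}$ exactly one of $x$ and $\delta_1-x$ lies in $\beta(\la)$. Here $\delta_1=h(1,1)=\max\beta(\la)$ is \emph{odd}, so the reflection $x\mapsto\delta_1-x$ has no fixed point and \emph{reverses parity}; this is the mechanism that should produce the factor $2$.

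The second step is the pairing. I would take the involution $\tau(b)=\delta_1+2k-b$ and show it restricts to a bijection between the even and the odd contributors to $n_\la(2k)$. If $b\in\beta(\la)$ is even with $b\ge 2k$ and $b-2k\notin\beta(\la)$, then $\tau(b)$ is odd (as $\delta_1$ is odd); self‑complementarity applied to $b-2k\in\{0,\dots,\delta_1\}$ gives $\tau(b)=\delta_1-(b-2k)\in\beta(\la)$, while $\tau(b)\ge 2k$ (since $b\le\delta_1$) and $\tau(b)-2k=\delta_1-b\notin\beta(\la)$ (since $b\in\beta(\la)$). Thus $\tau(b)$ is an odd contributor, and the symmetric computation gives the reverse direction; being an involution, $\tau$ is a bijection, so $E=O$ and $n_\la(2k)=2E$.

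It then remains to identify $E$ with $n_\mu(k)$. Writing even $b=2b'$, the conditions ``$b\ge 2k$, $b-2k\notin\beta(\la)$'' become ``$b'\ge k$, $b'-k\notin\beta_{e/2}(\la)$'' (using that an even number lies in $\beta(\la)$ iff its half lies in $\beta_{e/2}(\la)$), so $E$ is exactly the beta‑set hook count at $k$ of the partition whose beta‑set is $\beta_{e/2}(\la)$. By Proposition \ref{prop:evenhook} this beta‑set equals $\beta(\mu)$ or $\beta(\mu')$; since conjugate partitions have the same hook multiset, in either case $E=n_\mu(k)$, giving $n_\la(2k)=2\,n_\mu(k)$. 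The conceptual crux is the odd‑reflection self‑complementarity that balances even hooks against odd hooks, so the step demanding the most care is purely technical: justifying the beta‑set hook‑count formula together with its shift‑invariance (so that it may legitimately be applied to $\beta_{e/2}(\la)$), and checking that $\tau$ simultaneously preserves all three contributor conditions while flipping parity.
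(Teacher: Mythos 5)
Your proof is correct, but it takes a genuinely different route from the paper's. The paper proves Theorem \ref{thm:second} by strong induction on $|\la|$, peeling off the principal hook $\delta_1$: Proposition \ref{prop:delete} identifies the corresponding partition of the peeled self-conjugate partition as $\mu$ with its first row (resp.\ first column) removed, and Proposition \ref{prop:evenhook} then matches the even hooks in the first row and column of $\la$ with the hooks in the first row (resp.\ column) of $\mu$. You instead give a direct, non-inductive count: the classical beta-set formula $n_\nu(h)=\#\{b\in\beta(\nu):b\ge h,\ b-h\notin\beta(\nu)\}$, the self-complementarity of $\beta(\la)$ in $\{0,\dots,\delta_1\}$ (which is exactly Lemma \ref{lem:firsthook} applied to $\la$ itself, using $\beta(\la)=\beta(\la')$), the parity-reversing involution $\tau(b)=\delta_1+2k-b$ to get $E=O$, and Proposition \ref{prop:evenhook} to identify $E$ with $n_\mu(k)$. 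I checked the involution carefully: all three contributor conditions are preserved and parity is flipped precisely because $\delta_1$ is odd, so the argument is sound (the empty partition is trivially fine, and conjugation-invariance of the hook multiset handles the case $\beta_{e/2}(\la)=\beta(\mu')$). Your approach buys a one-shot bijective explanation of the factor $2$ and dispenses with Proposition \ref{prop:delete} and the induction entirely; its cost is reliance on the beta-set hook-multiplicity formula, which is standard (it is the general-row version of Lemma \ref{lem:firsthook} and follows from the abacus description) but is not stated in the paper, so a complete write-up should include its short proof or a citation, together with the shift-invariance remark you already flag.
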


The following proposition is necessary to prove Theorem \ref{thm:second}.
     
\begin{prop} \label{prop:delete} 
For $\la\in\mathcal{SC}^{(m)}$ with $D(\la)=\{\delta_1,\delta_2,\dots,\delta_d\}$, let $\bar{\la}$ be the self-conjugate partition with $D(\bar{\la})=\{\delta_i\in D(\la)~:~2\leq i \leq d\}$, and let $\mu=(\mu_1,\mu_2,\dots,\mu_{\ell})$ and $\bar{\mu}$ be the corresponding partitions of $\la$ and $\bar{\la}$, respectively. If $\mu=(\mu_1,\mu_2,\dots,\mu_{\ell})$, then 
$$\bar{\mu}=\begin{cases}
            (\mu_2,\mu_3,\dots,\mu_{\ell}) & \text{if}\quad \delta_1\in D_1(\la)\\  (\mu_1-1,\mu_2-1,\dots,\mu_{\ell}-1) & \text{if}\quad \delta_1\in D_3(\la)
                \end{cases} $$ 
\end{prop}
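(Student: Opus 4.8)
The plan is to carry out the whole proof at the level of the diagonal sequence pair, so that everything reduces to substituting shortened sequences into the explicit formula defining $\phi^{(m)}$. Write the diagonal sequence pair of $\la$ as $((a_1,\dots,a_r),(b_1,\dots,b_s))$, so that $\mu_i=a_i+i+s-r$ for $i\le r$ and $(\mu_{r+1},\dots,\mu_\ell)$ is the conjugate of $\gamma=(b_1-s,b_2-s+1,\dots,b_s-1)$. The guiding observation is that the recipe for $\phi$ depends only on the two sequences and on their lengths $r,s$ (the index $m$ being determined by $r-s$), so the corresponding partition $\bar\mu$ of $\bar\la$ is obtained by feeding the shortened diagonal sequence pair into the very same formula. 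Deleting $\delta_1$ removes exactly one entry from one of the two sequences, and which sequence is determined by the residue of $\delta_1$ modulo $4$; this is the source of the two cases. I would also note at the outset that $\bar\la$ is automatically a legitimate self-conjugate partition, since $\{\delta_2,\dots,\delta_d\}$ is still a set of distinct odd integers, so $\bar\mu$ is well-defined by Theorem \ref{thm:bijective} and the formula applies verbatim.

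First I would treat $\delta_1\in D_1(\la)$, i.e. $\delta_1=4a_1+1$. Here $\bar\la$ has diagonal sequence pair $((a_2,\dots,a_r),(b_1,\dots,b_s))$, so its parameters are $\bar r=r-1$, $\bar s=s$, and $\bar\gamma=\gamma$ is unchanged. Direct substitution into the defining formula gives $\bar\mu_i=a_{i+1}+i+s-(r-1)=\mu_{i+1}$ for $i\le r-1$, and since the conjugate of $\bar\gamma=\gamma$ now occupies positions starting at $\bar r+1=r$, the tail satisfies $\bar\mu_{r-1+k}=\mu_{r+k}$. Concatenating the two index ranges yields $\bar\mu_i=\mu_{i+1}$ for every $i$, that is $\bar\mu=(\mu_2,\dots,\mu_\ell)$.

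Next I would treat $\delta_1\in D_3(\la)$, i.e. $\delta_1=4b_1-1$. Now $\bar\la$ has diagonal sequence pair $((a_1,\dots,a_r),(b_2,\dots,b_s))$, so $\bar r=r$ and $\bar s=s-1$. For the head the substitution is immediate: $\bar\mu_i=a_i+i+(s-1)-r=\mu_i-1$ for $i\le r$. For the tail one checks that the shortened partition $\bar\gamma$ equals $(\gamma_2,\dots,\gamma_s)$, i.e. $\gamma$ with its largest part $\gamma_1=b_1-s$ removed. The one genuinely structural step is the observation that deleting the largest part of a partition and then conjugating decreases each of the first $\gamma_1$ parts of the conjugate by exactly one; equivalently $\bar\gamma'_k=\gamma'_k-1$ at every $k$ with $\gamma'_k>0$, and $\bar\gamma'_k=\gamma'_k=0$ otherwise. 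Since $(\mu_{r+1},\dots,\mu_\ell)=\gamma'$, this gives $\bar\mu_{r+k}=\mu_{r+k}-1$, and together with the head computation we obtain $\bar\mu=(\mu_1-1,\dots,\mu_\ell-1)$.

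The computations in both cases are routine once the formula is in hand; the only point requiring care is the tail in the second case, where one must account correctly for the zero parts that $\gamma$ is permitted to carry and verify that removing the largest part of $\gamma$ translates, under conjugation, into a uniform decrease of the relevant column lengths. This conjugation bookkeeping is where I expect the main (minor) obstacle to lie, and I would isolate it as the single observation about part-deletion versus conjugation stated above.
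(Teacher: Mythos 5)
Your proposal is correct and follows essentially the same route as the paper: substitute the shortened diagonal sequence pair into the defining formula for $\phi^{(m)}$ and split into the two cases according to the residue of $\delta_1$ modulo $4$. The only difference is that you spell out the conjugation bookkeeping for the tail in the $\delta_1\in D_3(\la)$ case (removing the largest part of $\gamma$ decreases each nonzero part of $\gamma'$ by one), a step the paper's proof leaves implicit.
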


\begin{proof}
We let $((a_1,\dots,a_{r}),(b_1,\dots,b_s))$ be the diagonal sequence pair of $\la$ so that $\phi^{(m)}(\la)=\mu$ is constructed by 
$$\mu_i=a_{i}+i+s-r \qquad \text{for} \quad i \leq r,$$ 
and $(\mu_{r+1}, \dots, \mu_{\ell})$ is the conjugate of the partition $\gamma=(b_1-s, b_2-s+1, \dots, b_s-1)$.\\

We denote $\bar{\mu}=(\bar{\mu}_1,\bar{\mu}_2,\dots)$ and consider the following two cases.

\begin{enumerate}
\item[]\textbf{Case 1)} If $\delta_1\in D_1(\la)$, the diagonal sequence pair of $\bar{\la}$ is $((a_2,\dots,a_{r}),(b_1,\dots,b_s))$ for $r\geq2$ or $(\emptyset,(b_1,\dots,b_s))$ for $r=1$. Then the corresponding partition $\bar{\mu}$ of $\bar{\la}$ is defined by 
$$\bar{\mu}_i=a_{i+1}+i+s-(r-1)=a_{i+1}+(i+1)+s-r\qquad \text{for} \quad i \leq r-1,$$ 
and $(\bar{\mu}_{r},\bar{\mu}_{r+1}, \dots)$ is the conjugate of $(b_1-s, b_2-s+1, \dots, b_s-1)$. Hence, $\bar{\mu}_i=\mu_{i+1}$ for all $i$.
\item[]\textbf{Case 2)} If $\delta_1\in D_3(\la)$, the diagonal sequence pair of $\bar{\la}$ is $((a_1,\dots,a_{r}),(b_2,\dots,b_s))$ for $s\geq2$ or $((a_1,\dots,a_r),\emptyset)$ for $s=1$. In this case, the corresponding partition $\bar{\mu}$ of $\bar{\la}$ is defined by 
$$\bar{\mu}_i=a_{i}+i+(s-1)-r=a_{i}+i+s-r-1\qquad \text{for} \quad i \leq r,$$ 
and $(\bar{\mu}_{r+1},\bar{\mu}_{r+2}, \dots)$ is the conjugate of $\bar{\gamma}=(b_2-(s-1), \dots, b_s-1)=(b_2-s+1,\dots,b_s-1)$. Therefore, $\bar{\mu}_i=\mu_{i}-1$ for all $i$.
\end{enumerate}
\end{proof}              

\begin{proof}[Proof of Theorem \ref{thm:second}]
We apply strong induction on $|\la|$. For $|\la|=0$, $\la=\emptyset\in\mathcal{SC}^{(0)}$. Since $\phi^{(0)}(\la)=\emptyset$, the assertion holds.

Now, we assume that the assertion holds for $|\la|<n$. Let $\la$ be a self-conjugate partition of $n\geq1$ with $D(\la)=\{\delta_1,\delta_2,\dots,\delta_d\}$ and let $\mu=(\mu_1,\mu_2,\dots,\mu_{\ell})$ be the corresponding partition of $\la$.
If $\bar{\la}$ is the self-conjugate partition of $n-\delta_1$ with $D(\bar{\la})=\{\delta_i~:~2\leq i \leq d\}$ and $\bar{\mu}$ is its corresponding partition, then by our assumption, the number of boxes $(i,j)$ with $h_{\bar{\la}}(i,j)=2k$ is equal to twice the number of boxes $(\tilde{i},\tilde{j})$ with $h_{\bar{\mu}}(\tilde{i},\tilde{j})=k$ for each $k\geq 1$. We consider the following two cases.
\begin{enumerate}
\item[]\textbf{Case 1)} Let $\delta_1\in D_1(\la)$. By Proposition \ref{prop:delete}, $\bar{\mu}=(\mu_2,\mu_3,\dots,\mu_{\ell})$. Therefore, it is enough to show that 
the number of boxes $(i,j)$ either in the first row or in the first column of $\la$ with $h_{\la}(i,j)=2k$ is equal to twice the number of boxes $(\tilde{i},\tilde{j})$ in the first row in $\mu$ with $h_{\mu}(\tilde{i},\tilde{j})=k$ for each $k\geq 1$.
By Proposition \ref{prop:evenhook}, $\beta_{e/2}(\la)=\beta(\mu')$. Moreover, $\beta_{e/2}(\la')=\beta(\mu')$ since $\la$ is self-conjugate.

\item[]\textbf{Case 2)} Let $\delta_1\in D_3(\la)$. By Proposition \ref{prop:delete}, $\bar{\mu}=(\mu_1-1,\mu_2-1,\dots,\mu_{\ell}-1)$. Therefore, it is enough to show that the number of boxes $(i,j)$ either in the first row or in the first column of $\la$ with $h_{\la}(i,j)=2k$ is equal to twice the number of boxes $(\tilde{i},\tilde{j})$ in the first column in $\mu$ with $h_{\mu}(\tilde{i},\tilde{j})=k$ for each $k\geq 1$.
By Proposition \ref{prop:evenhook}, $\beta_{e/2}(\la)=\beta_{e/2}(\la')=\beta(\mu)$, as we desired.
\end{enumerate}
\end{proof}    

The following corollary is obtained directly from Theorem \ref{thm:second}.

\begin{cor}
For a self-conjugate partition $\lambda\in\mathcal{SC}^{(m)}$, let $\mu$ be the corresponding partition of $\lambda$. Then $\lambda$ is a $(2t_1,2t_2,\dots,2t_p)$-core partition if and only if $\mu$ is a $(t_1,t_2,\dots,t_p)$-core partition.
\end{cor}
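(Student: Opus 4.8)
The plan is to deduce this corollary directly from Theorem \ref{thm:second}, after first reducing the simultaneous-core condition to a statement about a single modulus. By definition, $\la$ is a $(2t_1,\dots,2t_p)$-core if and only if $\la$ is a $2t_i$-core for every $i$, and likewise $\mu$ is a $(t_1,\dots,t_p)$-core if and only if $\mu$ is a $t_i$-core for every $i$. Hence it suffices to prove, for a single positive integer $t$, that $\la$ is a $2t$-core if and only if $\mu$ is a $t$-core; intersecting over $i=1,\dots,p$ then gives the full statement.

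The key observation is a parity remark: since $2t$ is even, every multiple of $2t$ is even, so no odd hook length of $\la$ can ever be divisible by $2t$. Therefore $\la$ fails to be a $2t$-core precisely when it has an \emph{even} hook length $2k$ (for some positive integer $k$) that is divisible by $2t$, which is the same as saying $t\mid k$. First I would record this equivalence:
$$\la \text{ is not a } 2t\text{-core} \iff \la \text{ has a box } (i,j) \text{ with } h_{\la}(i,j)=2k \text{ for some } k \text{ with } t\mid k.$$
In the same way, $\mu$ is not a $t$-core exactly when $\mu$ has a box with hook length $k$ for some $k$ divisible by $t$.

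Next I would invoke Theorem \ref{thm:second}: for each positive integer $k$, the number of boxes of $\la$ with hook length $2k$ equals twice the number of boxes of $\mu$ with hook length $k$. For the core question only the support matters, so the relevant consequence is that $\la$ has a box with hook length $2k$ if and only if $\mu$ has a box with hook length $k$ (one count is zero exactly when the other is). Applying this equivalence for each $k$ divisible by $t$ and combining with the two displayed characterizations above gives $\la$ is not a $2t$-core $\iff$ $\mu$ is not a $t$-core, and taking contrapositives yields $\la$ is a $2t$-core $\iff$ $\mu$ is a $t$-core. Quantifying over $i=1,\dots,p$ completes the argument.

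I do not expect a genuine obstacle here, since the content is carried entirely by Theorem \ref{thm:second}; the only point requiring care is the parity remark, which is what makes the even-hook-length correspondence the right tool. It must be stated explicitly that odd hook lengths of $\la$ are automatically irrelevant to the $2t$-core condition, so that the correspondence between even hooks $2k$ of $\la$ and hooks $k$ of $\mu$ captures the entire obstruction to being a core. The factor of two in Theorem \ref{thm:second} plays no role beyond guaranteeing that the relevant sets of hook lengths have matching support.
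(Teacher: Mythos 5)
Your proof is correct and matches the paper's intent exactly: the paper gives no written proof, stating only that the corollary "is obtained directly from Theorem \ref{thm:second}," and your argument (odd hook lengths are irrelevant to the $2t$-core condition, so the even-hook correspondence of Theorem \ref{thm:second} carries the whole statement) is precisely the routine verification being left to the reader.
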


We denote the set of self-conjugate $(t_1,t_2,\dots,t_p)$-core partitions $\la\in\mathcal{SC}^{(m)}$ of $n$ by $\mathcal{SC}^{(m)}_{(t_1,...,t_p)}(n)$, and use notation $sc^{(m)}_{(t_1,...,t_p)}(n)$ for 
$|\mathcal{SC}^{(m)}_{(t_1,...,t_p)}(n)|$.\\

By using Theorem \ref{thm:bijective} and Theorem \ref{thm:second}, we obtain the cardinality of $\mathcal{SC}^{(m)}_{(2t_1,...,2t_p)}(n)$.

\begin{prop}\label{prop:coresize} For $m\geq0$, the number of self-conjugate $(2t_1,2t_2,\dots,2t_p)$-core partitions of $n$ with the disparity $\frac{m(m+1)}{2}$ is
$$sc_{(2t_1,...,2t_p)}^{(m)}(n)=\begin{cases}
                c_{(t_1,...,t_p)}(k) & {\text if} \quad n=4k+\frac{m(m+1)}{2},\\
                               0     & {\text otherwise}.
                               \end{cases}$$
\end{prop}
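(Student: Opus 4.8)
The plan is to combine the bijection $\phi_n^{(m)}$ with the hook-length correspondence of Theorem~\ref{thm:second}, exactly as the statement that this proposition follows ``by using Theorem~\ref{thm:bijective} and Theorem~\ref{thm:second}'' suggests. First I would observe that every self-conjugate partition of $n$ with disparity $\frac{m(m+1)}{2}$ lies in $\mathcal{SC}^{(m)}$ by Proposition~\ref{prop:oddeven}, and conversely every element of $\mathcal{SC}^{(m)}(n)$ has this disparity. By Proposition~\ref{prop:size}, such a partition exists only when $n=4k+\frac{m(m+1)}{2}$ for some nonnegative integer $k$; this immediately disposes of the ``otherwise'' case, where there are no partitions to count and $sc_{(2t_1,\dots,2t_p)}^{(m)}(n)=0$.

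For the main case $n=4k+\frac{m(m+1)}{2}$, the key step is to show that the bijection $\phi^{(m)}:\mathcal{SC}^{(m)}\to\mathcal{P}$ restricts to a bijection between self-conjugate $(2t_1,\dots,2t_p)$-core partitions in $\mathcal{SC}^{(m)}(n)$ and ordinary $(t_1,\dots,t_p)$-core partitions in $\mathcal{P}(k)$. By Theorem~\ref{thm:bijective}, $\phi_n^{(m)}$ is already a bijection from $\mathcal{SC}^{(m)}(4k+\frac{m(m+1)}{2})$ onto $\mathcal{P}(k)$, so it suffices to check that this bijection matches up the two core conditions. Here I would invoke the corollary immediately preceding this proposition: for $\la\in\mathcal{SC}^{(m)}$ with corresponding partition $\mu=\phi^{(m)}(\la)$, the partition $\la$ is a $(2t_1,\dots,2t_p)$-core if and only if $\mu$ is a $(t_1,\dots,t_p)$-core. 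Thus $\phi_n^{(m)}$ carries the set $\mathcal{SC}^{(m)}_{(2t_1,\dots,2t_p)}(n)$ bijectively onto the set of $(t_1,\dots,t_p)$-core partitions in $\mathcal{P}(k)$, whose cardinality is exactly $c_{(t_1,\dots,t_p)}(k)$.

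Counting then gives $sc_{(2t_1,\dots,2t_p)}^{(m)}(n)=c_{(t_1,\dots,t_p)}(k)$, completing the proof. The only real content beyond bookkeeping is the equivalence of the two core conditions under $\phi^{(m)}$, and that is precisely what the preceding corollary supplies. The heart of that corollary, in turn, is Theorem~\ref{thm:second}: a box with hook length $2k'$ in $\la$ corresponds (two-to-one) to a box with hook length $k'$ in $\mu$, so a multiple of $2t_i$ appears as a hook length of $\la$ exactly when a multiple of $t_i$ appears as a hook length of $\mu$. The step I expect to require the most care is verifying that no \emph{odd} hook length of $\la$ can be a multiple of $2t_i$ in a way that Theorem~\ref{thm:second} does not see; but since any multiple of $2t_i$ is even, the odd hook lengths of $\la$ are automatically irrelevant to the $2t_i$-core condition, and Theorem~\ref{thm:second} accounts for all even hook lengths, so no gap arises. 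This is the subtle point worth stating explicitly rather than a genuine obstacle.
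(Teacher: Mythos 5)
Your proposal is correct and follows exactly the route the paper intends: the paper gives no written proof beyond asserting that the proposition follows from Theorem~\ref{thm:bijective} and Theorem~\ref{thm:second}, and your argument fills in precisely that reasoning, using Proposition~\ref{prop:size} for the vanishing case and the corollary of Theorem~\ref{thm:second} to match the core conditions under $\phi_n^{(m)}$. Your explicit remark that odd hook lengths of $\la$ cannot be multiples of the even integers $2t_i$ is the right subtlety to flag, and it closes the only potential gap.
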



From the previous proposition, we have Theorem \ref{thm:simul}.



\subsection{Counting self-conjugate $(2t_1,\dots,2t_p)$-core partitions with the disparity $\frac{m(m+1)}{2}$}        
                        

In this subsection, we give some sets of self-conjugate partitions each of them is counted by known special numbers.
It is well-known that there are finitely many $(t_1,...,t_p)$-core partitions when  $t_1,...,t_p$ are relatively prime numbers. From Proposition \ref{prop:coresize}, we have the following result.  

\begin{cor}\label{cor:sim}
For relatively prime numbers $t_1,...,t_p$, the number of self-conjugate $(2t_1,...,2t_p)$-core partitions with the disparity $\frac{m(m+1)}{2}$ is equal to the number of $(t_1,...,t_p)$-core partitions.
\end{cor}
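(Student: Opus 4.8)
The plan is to obtain Corollary \ref{cor:sim} by summing Proposition \ref{prop:coresize} over all partition sizes $n$, once finiteness is invoked. First I would recall the well-known fact, cited just above the statement, that when $t_1,\dots,t_p$ are relatively prime there are only finitely many $(t_1,\dots,t_p)$-core partitions; equivalently $c_{(t_1,\dots,t_p)}(k)=0$ for all but finitely many $k$, so that the total count $\sum_{k\geq 0} c_{(t_1,\dots,t_p)}(k)$ is a well-defined finite integer. This in turn guarantees that the quantity we wish to evaluate, namely the total number of self-conjugate $(2t_1,\dots,2t_p)$-core partitions with disparity $\frac{m(m+1)}{2}$, is also finite, so both sides of the claimed equality are honest numbers.

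Next I would express that total count as $\sum_{n\geq 0} sc^{(m)}_{(2t_1,\dots,2t_p)}(n)$. By Proposition \ref{prop:coresize}, each summand vanishes unless $n=4k+\frac{m(m+1)}{2}$ for some nonnegative integer $k$, in which case it equals $c_{(t_1,\dots,t_p)}(k)$. Since $k\mapsto 4k+\frac{m(m+1)}{2}$ is a bijection from the nonnegative integers onto the set of admissible sizes, reindexing the sum by $k$ yields
$$\sum_{n\geq 0} sc^{(m)}_{(2t_1,\dots,2t_p)}(n)=\sum_{k\geq 0} c_{(t_1,\dots,t_p)}(k),$$
whose right-hand side is precisely the number of $(t_1,\dots,t_p)$-core partitions. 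This completes the argument.

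There is essentially no combinatorial or analytic obstacle here: all the substance is carried by Proposition \ref{prop:coresize}, which already matches each self-conjugate $(2t_1,\dots,2t_p)$-core of size $4k+\frac{m(m+1)}{2}$ with a $(t_1,\dots,t_p)$-core of size $k$. The only point requiring care is the appeal to finiteness, which is what lets us read ``the number of $(t_1,\dots,t_p)$-core partitions'' as a genuine integer rather than a formal power series; under relatively prime moduli this is exactly the cited finiteness theorem. Equivalently, one may phrase the whole argument through generating functions: Proposition \ref{prop:coresize} gives $\sum_{n\geq0} sc^{(m)}_{(2t_1,\dots,2t_p)}(n)\,q^n=q^{m(m+1)/2}\sum_{k\geq0} c_{(t_1,\dots,t_p)}(k)\,q^{4k}$, and specializing $q=1$ collapses both sides to the claimed equality, the specialization being legitimate precisely because relative primality makes both sides polynomials.
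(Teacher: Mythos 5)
Your argument is correct and is essentially the paper's own: the paper derives Corollary \ref{cor:sim} directly from Proposition \ref{prop:coresize} together with the cited finiteness of $(t_1,\dots,t_p)$-core partitions for relatively prime moduli, exactly as you do by summing over $n$ and reindexing by $k$. Nothing is missing.
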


Anderson \cite{Anderson} gives an expression for the Catalan number in terms of simultaneous core partitions, and Amderberhan and Leven \cite{AL}, Yang, Zhong, and Zhou \cite{YZZ}, Wang \cite{Wang}, respectively, gives an identity for the Motzkin number. 

\begin{thm}\cite{Anderson}\label{thm:anderson}
For relatively prime integers $t_1,t_2\geq1$, the number of $(t_1,t_2)$-core partitions is
$$c_{(t_1,t_2)}=\frac{1}{t_1+t_2}\binom{t_1+t_2}{t_1}.$$ 
In particular, $c_{(n,n+1)}=C_n$, where $C_n=\frac{1}{n+1}\binom{2n}{n}$ is the $n$th Catalan number.
\end{thm}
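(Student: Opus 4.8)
The plan is to prove Anderson's count through the classical encoding of a partition by its first-column hook lengths together with the cycle lemma. First I would record the standard characterization of $t$-cores in these terms: extending the beta-set $\beta(\la)$ to the infinite ``Maya diagram'' $\widehat{\beta}(\la)=\beta(\la)\cup\{0,-1,-2,\dots\}\subseteq\mathbb{Z}$, a partition $\la$ is a $t$-core precisely when $\widehat{\beta}(\la)$ is closed under subtraction by $t$, i.e.\ $x\in\widehat{\beta}(\la)$ with $x\geq t$ forces $x-t\in\widehat{\beta}(\la)$. This is because removing a rim hook of length $t$ corresponds to sliding an occupied position down by $t$ into a vacant one, so the absence of hook lengths divisible by $t$ is equivalent to this closure. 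Consequently $\la$ is simultaneously a $t_1$-core and a $t_2$-core if and only if $\widehat{\beta}(\la)$ is closed under subtraction by both $t_1$ and $t_2$, hence by every element of the numerical semigroup $S=\langle t_1,t_2\rangle=\{c_1t_1+c_2t_2:c_1,c_2\geq 0\}$.

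Next I would exploit coprimality. Since $\gcd(t_1,t_2)=1$, the semigroup $S$ contains every integer $\geq (t_1-1)(t_2-1)$, so its set of gaps $G=\mathbb{Z}_{>0}\setminus S$ is finite of size $(t_1-1)(t_2-1)/2$ by the Sylvester--Frobenius count. Because $\widehat{\beta}(\la)$ agrees with $\{0,-1,-2,\dots\}$ below some bound and is closed under subtracting $t_1$ and $t_2$, the core $\la$ is determined entirely by which gaps in $G$ belong to $\widehat{\beta}(\la)$, and the closure condition forces this set of occupied gaps to be an order ideal of the poset $P_{t_1,t_2}$ on $G$ defined by $g\preceq g'\iff g'-g\in S$. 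I would therefore set up a bijection between $(t_1,t_2)$-core partitions and order ideals of $P_{t_1,t_2}$. I expect the principal obstacle to be exactly this step: verifying that the map is well defined, meaning that every order ideal extends to a genuine Maya diagram of an honest partition, and that it is a bijection; coprimality is essential here, since it is what makes the gap set finite and the poset planar.

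Finally I would count the order ideals. The poset $P_{t_1,t_2}$ is a planar ``staircase,'' and its order ideals are in bijection with monotone lattice paths from $(0,0)$ to $(t_2,t_1)$ that stay weakly below the diagonal of the $t_1\times t_2$ rectangle; because $\gcd(t_1,t_2)=1$, the only lattice points on that diagonal are the two endpoints. Applying the cycle lemma to the $t_1+t_2$ cyclic rotations of each step word then yields exactly one admissible rotation per orbit, so the number of such paths is $\frac{1}{t_1+t_2}\binom{t_1+t_2}{t_1}$, which is the asserted formula; the remaining delicate point is confirming that coprimality produces this clean rational-Catalan count rather than a fractional average over several rotations. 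The particular case is immediate: setting $t_2=t_1+1=n+1$ gives $c_{(n,n+1)}=\frac{1}{2n+1}\binom{2n+1}{n}=\frac{1}{n+1}\binom{2n}{n}=C_n$.
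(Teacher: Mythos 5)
First, note that the paper offers no proof of this statement: it is quoted from Anderson's article and used as a black box, so there is no internal argument to compare against. Your proposal is, in essence, a reconstruction of Anderson's original proof --- the beta-set/abacus characterization of $t$-cores, the passage to the gap set of the numerical semigroup $S=\langle t_1,t_2\rangle$, the bijection with order ideals of the poset on the gaps ordered by $g\preceq g'\iff g'-g\in S$, and the lattice-path count via the cycle lemma --- and the outline is sound. The two points you flag as delicate are indeed the only places where real checking is needed, and both succeed for the reasons you give: coprimality makes the gap set finite, and it forces every cyclic orbit of step words with $t_1$ north and $t_2$ east steps to have full length $t_1+t_2$ (a shorter period $d$ would make $(t_1+t_2)/d$ divide both $t_1$ and $t_2$), so the cycle lemma yields exactly one admissible rotation per orbit and the clean count $\frac{1}{t_1+t_2}\binom{t_1+t_2}{t_1}$.

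There is one concrete slip to repair. With $\beta(\la)$ the set of first-column hook lengths (as in this paper), the correct infinite extension is $\widehat{\beta}(\la)=\beta(\la)\cup\{-1,-2,\dots\}$; the position $0$ must be \emph{vacant}. If you include $0$ in $\widehat{\beta}(\la)$ as written, the closure criterion fails already for $\la=(2)$ and $t=2$: then $\widehat{\beta}(\la)=\{2,0,-1,-2,\dots\}$ is closed under subtracting $2$, yet $\la$ has a hook of length $2$ and is not a $2$-core. With the corrected normalization, $0\in S$ is not a gap, and closure under $S$ forces every occupied positive position to lie in the gap set $G$ (otherwise one could subtract generators down to the vacant position $0$); this is precisely what makes the set of occupied gaps an order ideal and the map to order ideals a bijection. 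Everything else, including the specialization $c_{(n,n+1)}=\frac{1}{2n+1}\binom{2n+1}{n}=C_n$, is correct.
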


\begin{thm}\cite{Wang}\label{thm:Wang}
For relatively prime integers $n,d\geq1$, the number of $(n,n+d,n+2d)$-core partitions is
$$c_{(n,n+d,n+2d)}=\frac{1}{n+d}\sum_{i=0}^{\lfloor n/2\rfloor}\binom{n+d}{i,i+d,n-2i}.$$
In particular, $c_{(n,n+1,n+2)}$ is the $n$th Motzkin number $M_n=\sum_{i\geq 0} \frac{1}{i+1}\binom{n}{2i}\binom{2i}{i}$.
\end{thm}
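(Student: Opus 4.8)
The plan is to count $(n,n+d,n+2d)$-core partitions by encoding them as cyclic words and then exploiting the coprimality hypothesis through a freeness argument. First I would pass from a partition $\la$ to its boundary profile: reading the border of the Young diagram as a bi-infinite word $w\in\{N,E\}^{\mathbb{Z}}$, the standard fact is that $\la$ is a $t$-core if and only if $w$ contains no pair $w_p=N$, $w_{p+t}=E$ (equivalently, $\la$ has no hook of length exactly $t$, which for cores is the same as no hook length divisible by $t$). Thus a simultaneous $(n,n+d,n+2d)$-core is precisely a profile forbidding such $N$--$E$ pairs at the three distances $n$, $n+d$, and $n+2d$. Because these distances form an arithmetic progression with common difference $d$, the three constraints are highly redundant, and the first step is to show that they collapse into a single finite combinatorial datum: a ternary word $u$ of length $n+d$ whose three letters record the local up/down/level behaviour of an associated lattice path, with letter multiplicities $(i,\,i+d,\,n-2i)$ for some $0\le i\le\lfloor n/2\rfloor$. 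The excess of $d$ between the ``down'' and ``up'' letters reflects the net displacement forced by the progression. I expect this to be the main obstacle: proving that the arithmetic-progression conditions are equivalent to a single length-$(n+d)$ word, including the finiteness of the core and the precise content $(i,i+d,n-2i)$.

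Next I would observe that the core partition determines, and is determined by, the word $u$ only up to cyclic rotation; that is, the genuine invariant is the necklace, the orbit of $u$ under the cyclic group $C_{n+d}$. Granting the bijection between $(n,n+d,n+2d)$-cores and such necklaces, the count reduces to enumerating necklaces of ternary words of length $n+d$ with content $(i,i+d,n-2i)$, summed over $i$. Here the hypothesis $\gcd(n,d)=1$ does the essential work. If such a word had a proper period $p\mid(n+d)$ with $p<n+d$, then $(n+d)/p$ would divide each letter count, in particular $i$ and $i+d$, hence it would divide $d$; since it also divides $n+d$ and $\gcd(n+d,d)=\gcd(n,d)=1$, it would have to equal $1$, a contradiction. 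Every such word is therefore aperiodic, the cyclic action is free, each orbit has size exactly $n+d$, and the number of necklaces is
$$\frac{1}{n+d}\sum_{i=0}^{\lfloor n/2\rfloor}\binom{n+d}{i,\,i+d,\,n-2i},$$
which is the claimed formula.

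Finally, for the Motzkin specialization $d=1$ I would make the freeness argument concrete via the cycle lemma: among the $n+1$ rotations of a ternary word of length $n+1$ carrying one more ``down'' letter than ``up'' letter, exactly one, upon deleting its distinguished initial down-step, becomes a path of length $n$ with equally many up- and down-steps that stays weakly positive, i.e.\ a Motzkin path of length $n$. This produces a bijection between the necklaces and Motzkin paths, giving $c_{(n,n+1,n+2)}=M_n$ directly. The remaining identity $\frac{1}{n+1}\sum_i\binom{n+1}{i,i+1,n-2i}=\sum_{i\ge0}\frac{1}{i+1}\binom{n}{2i}\binom{2i}{i}$ is then routine: both sides count $M_n$, or one expands the multinomial and recognizes the Catalan factor $\frac{1}{i+1}\binom{2i}{i}$ as the Dyck substructure on the $2i$ non-level steps after $\binom{n}{2i}$ chooses their positions.
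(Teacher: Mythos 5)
This theorem is not proved in the paper at all: it is quoted from \cite{Wang} as a known input to Corollary \ref{cor:catalanmotzkin}, so there is no in-paper argument to compare against. Judged on its own terms, your proposal has a genuine gap at its center. The parts you carry out in full are the easy parts: the observation that a word of length $n+d$ with letter multiplicities $(i,i+d,n-2i)$ is aperiodic when $\gcd(n,d)=1$ (correct: a period with $e=(n+d)/p>1$ repetitions forces $e\mid i$ and $e\mid i+d$, hence $e\mid\gcd(d,n+d)=1$), the resulting necklace count $\frac{1}{n+d}\sum_i\binom{n+d}{i,i+d,n-2i}$, and the cycle-lemma derivation of $M_n$ in the case $d=1$. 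None of this touches core partitions. The entire content of the theorem is the bijection between $(n,n+d,n+2d)$-cores and these necklaces, and you explicitly defer it (``I expect this to be the main obstacle''). You give no construction of the ternary word from the boundary profile, no proof that the three forbidden-distance conditions at $n$, $n+d$, $n+2d$ ``collapse'' to a single datum of length $n+d$, no proof of finiteness, and no derivation of the precise content $(i,i+d,n-2i)$ --- in particular no explanation of where the asymmetry $d$ between up-steps and down-steps comes from, which is the delicate boundary effect in any abacus or beta-set analysis.

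The second unproved assertion is that the core determines the word \emph{only up to cyclic rotation}. A boundary profile (or a beta-set normalized to charge zero) comes with a canonical basepoint, so a priori you get an honest word, not a necklace; passing to orbits would overcount or undercount unless you exhibit a specific reason why exactly one normalization per orbit arises, or why all rotations of a valid word encode the same core. Without that, even granting the existence of some length-$(n+d)$ encoding, the factor $\frac{1}{n+d}$ is unjustified. As it stands the proposal is a plausible plan whose two load-bearing claims are both asserted rather than proved; to complete it you would need to work through the abacus with $n+d$ runners, translate the $n$- and $(n+2d)$-core conditions into constraints $|c_{j+d}-c_j|\le 1$ on consecutive runner charges along the cycle generated by $d$ (with the boundary corrections that produce the excess $d$ of down-steps), and then verify the rotation invariance --- which is essentially reconstructing the published proof.
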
  
    
By using Corollary \ref{cor:sim} and the above known results, we have the following corollary.

\begin{cor}\label{cor:catalanmotzkin}
Let $m\geq0$ be an integer.
\begin{enumerate} 
\item[\rm{(a)}] For relatively prime integers $t_1,t_2\geq1$, the number of self-conjugate $(2t_1,2t_2)$-core partitions with the disparity $\frac{m(m+1)}{2}$ is
$$sc^{(m)}_{(t_1,t_2)}=\frac{1}{t_1+t_2}\binom{t_1+t_2}{t_1}.$$
In particular, $sc^{(m)}_{(2n,2n+2)}=C_n$, where $C_n$ is the $n$th Catalan number.
\item[\rm{(b)}] For relatively prime integers $n,d\geq1$, the number of self-conjugate $(2n,2n+2d,2n+4d)$-core partitions with the disparity $\frac{m(m+1)}{2}$ is 
$$sc^{(m)}_{(2n,2n+2d,2n+4d)}=\frac{1}{n+d}\sum_{i=0}^{\lfloor n/2\rfloor}\binom{n+d}{i,i+d,n-2i}.$$
In particular, $sc^{(m)}_{(2n,2n+2,2n+4)}$ is the $n$th Motzkin number.
\end{enumerate}
\end{cor}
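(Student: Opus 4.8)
The plan is to obtain both identities as immediate consequences of Corollary~\ref{cor:sim} together with the closed-form enumerations of Anderson (Theorem~\ref{thm:anderson}) and Wang (Theorem~\ref{thm:Wang}). Corollary~\ref{cor:sim} already carries the combinatorial content: for relatively prime $t_1,\dots,t_p$ it equates the number of self-conjugate $(2t_1,\dots,2t_p)$-core partitions of disparity $\frac{m(m+1)}{2}$ with the total number $c_{(t_1,\dots,t_p)}$ of $(t_1,\dots,t_p)$-core partitions. In particular this count does not depend on $m$, so the whole task reduces to substituting the known values of $c_{(t_1,t_2)}$ and $c_{(n,n+d,n+2d)}$.

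For part (a) I would first note that the coprimality of $t_1$ and $t_2$ guarantees finiteness of the set of $(t_1,t_2)$-core partitions, so Corollary~\ref{cor:sim} applies and gives $sc^{(m)}_{(t_1,t_2)}=c_{(t_1,t_2)}$. Anderson's formula then yields $sc^{(m)}_{(t_1,t_2)}=\frac{1}{t_1+t_2}\binom{t_1+t_2}{t_1}$. For the specialization I would take $t_1=n$ and $t_2=n+1$; consecutive integers are automatically coprime, the associated self-conjugate cores are exactly the $(2n,2n+2)$-cores, and Theorem~\ref{thm:anderson} identifies $c_{(n,n+1)}$ with the Catalan number $C_n$, whence $sc^{(m)}_{(2n,2n+2)}=C_n$.

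Part (b) proceeds identically once the coprimality hypothesis is checked. The point to verify is that $\gcd(n,d)=1$ forces the triple $(n,n+d,n+2d)$ to be relatively prime, since $\gcd(n,n+d)=\gcd(n,d)=1$ already; thus Corollary~\ref{cor:sim} applies and gives $sc^{(m)}_{(2n,2n+2d,2n+4d)}=c_{(n,n+d,n+2d)}$. Substituting Wang's formula (Theorem~\ref{thm:Wang}) produces the stated multinomial sum, and setting $d=1$ specializes the triple to $(n,n+1,n+2)$, whose core count is the $n$th Motzkin number $M_n$ by the same theorem.

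Since every ingredient is already in place, I do not anticipate a substantive obstacle; the proof is purely a matter of invoking Corollary~\ref{cor:sim} and the two cited enumeration theorems. The only step demanding any care is the bookkeeping of the coprimality conditions---in particular confirming, as above, that $\gcd(n,d)=1$ secures the relative primality of $(n,n+d,n+2d)$ needed for the finiteness underlying Corollary~\ref{cor:sim}.
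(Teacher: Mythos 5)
Your proposal is correct and follows exactly the paper's route: the paper also derives this corollary immediately by combining Corollary~\ref{cor:sim} with Theorems~\ref{thm:anderson} and~\ref{thm:Wang}. Your extra check that $\gcd(n,d)=1$ implies the relative primality of $(n,n+d,n+2d)$ is a sensible piece of bookkeeping that the paper leaves implicit.
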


\section*{Acknowledgement}
The third named author's work was supported by the National Research Foundation of Korea (NRF) NRF-2017R1A2B4009501.



\bibliographystyle{elsarticle-harv} 
\bibliography{mybib}





\end{document}